\definecolor{linkColor}{HTML}{101f7d}
\definecolor{intrColor}{HTML}{BBD58E}
\definecolor{shade1}{HTML}{BBD58E}
\definecolor{shade2}{HTML}{F9A64D}
\definecolor{shade3}{HTML}{FD879B}
\newcommand\C{\mathsf{C}}
\newcommand\T{\mathsf{T}}
\newcommand\SSYT{\mathrm{SSYT}}
\newcommand\Gr{\mathrm{Gr}}
\newcommand\GL{\mathrm{GL}}
\newcommand\Z{\mathbb{Z}}
\newcommand{\iprod}{\mathbin{\lrcorner}}
\theoremstyle{plain}
\newtheorem{theorem}{Theorem}[section]
\newaliascnt{proposition}{theorem}
\newtheorem{proposition}[proposition]{Proposition}
\newaliascnt{lemma}{theorem}
\newtheorem{lemma}[lemma]{Lemma}
\newaliascnt{question}{theorem}
\newaliascnt{corollary}{theorem}
\newtheorem{corollary}[corollary]{Corollary}
\theoremstyle{definition}
\newaliascnt{definition}{theorem}
\newtheorem{definition}[definition]{Definition}
\newaliascnt{example}{theorem}
\newtheorem{example}[example]{Example}
\newaliascnt{remark}{theorem}
\newtheorem{remark}[remark]{Remark}
\crefname{remark}{remark}{remarks}
\Crefname{remark}{Remark}{Remarks}
\newaliascnt{conjecture}{theorem}
\newtheorem{conjecture}[conjecture]{Conjecture}
\crefname{conjecture}{conjecture}{conjectures}
\Crefname{conjecture}{Conjecture}{Conjectures}
\numberwithin{equation}{section}
\title{Bounded core partitions and Borel--Weil--Bott}
\author{Fern Gossow}
\address{Sydney, Australia}
\email{fernleaf.maths@gmail.com}
\author{Andrew Huchala}
\address{University of Oregon, USA}
\email{ahuchala@uoregon.edu}
\subjclass[2020]{Primary 05A17;
    Secondary 14M15}
\keywords{Borel--Weil--Bott, core partitions, bounded partitions, Grassmannian, Hodge numbers, sheaf cohomology}
\begin{document}

\begin{abstract}
The Borel--Weil--Bott theorem can be used to decompose the cohomology of twisted sheaves of holomorphic forms on the complex Grassmannian into irreducible representations of the general linear group. By analyzing this decomposition, we provide two effective formulae for computing the associated Hodge numbers, and give examples in special cases. One of these involves a novel integer-valued hook-product statistic on bounded partitions, and the other is based on semistandard tableaux. We reformulate Snow's observation that the positivity of the Hodge numbers is equivalent to the existence of a partition satisfying certain properties, and improve known bounds on when this occurs. This involves a combinatorial proof of the Nakano vanishing theorem for the Grassmannian utilizing a map from  core partitions to plane partitions. Finally, we extend our computation of the Hodge numbers of twisted holomorphic forms on the Grassmannian to a q-analogue.
\end{abstract}

\vspace*{-1cm}
\maketitle

\section{Introduction}

The Borel--Weil--Bott (BWB) theorem \cite{Serre95} is a tool for computing the higher sheaf cohomology associated to certain vector bundles, by a decomposition of each cohomology group into irreducible representations of a Lie group. In Type A, this Lie group is the group $\GL_n$ of complex invertible matrices, and its irreducible representations have an underlying combinatorial structure which can be neatly described in terms of partitions and Young tableaux. 

Fix integers $1\leq k< n$ and let $X:=\Gr(k,n)$ denote the Grassmannian of $k$-dimensional subspaces of $\mathbb{C}^n$. Let $\Omega_X^j(t):=\Omega^j_X\otimes\mathcal{O}_X(t)$ be the sheaf of twisted holomorphic $j$-forms on $X$. We are interested in computing the Hodge number
\[h^{j,i}(t):=\dim H^i(\Omega_X^j(t))\]
for integers $j,i\geq 0$ and $t$. The cohomology has a natural action of $\mathrm{GL}_n$ induced from the action on $\mathbb{C}^n$. For a dominant integral weight $\beta$, let $L_\beta$ denote the irreducible representation of highest weight $\beta$ and $L_\beta^\vee$ its dual. In \Cref{sec:BWB} we use the formulation of the BWB theorem from \cite[\S4.1]{Wey03} to observe that
\[H^i(\Omega_X^j(t))\cong\bigoplus_\lambda L_{\beta_\lambda(t)}^\vee\]
where $\beta_\lambda(t)$ is a dominant integral weight associated to $\lambda$ (see \Cref{def:beta}), and the direct sum is over all partitions $\lambda$ such that:
\begin{itemize}
\item $\lambda$ is $(k,n-k)$-bounded (meaning $\lambda$ has at most $k$ rows and $n-k$ columns),
\item $\lambda$ has $j$ boxes,
\item no boxes in $\lambda$ have hook length $t$, and
\item exactly $i$ boxes have hook length exceeding $t$.
\end{itemize}
The positivity of $h^{j,i}(t)$ is then equivalent to the existence of such a partition. This fact was first observed by Snow \cite{Sno86}, and we call $\lambda$ a \emph{Snow partition} with parameters $(k,n,j,t,i)$.

A partition with no boxes of hook length $t$ is known as a $t$-core partition, and the properties and applications of these objects are well-studied in the literature (see \cite{GKS90} for associated generating functions, \cite{GrOn96} for an approach based on representation theory, or \cite{Cho21} for a survey). Partitions which are both $t$-core and bounded in a rectangle were studied in \cite{Ayyer24}, but that analysis does not include the size of the partition or its interior.

A set of five inequalities on the parameters that each imply the nonexistence of Snow partitions was given by Snow, and in \Cref{sect:bounds} we generalize and improve these bounds by mapping each Snow partition to a plane partition using a construction of Chen \cite{Chen10}. In particular, we obtain a combinatorial proof of the Nakano vanishing theorem \cite{Nak54} for $\Omega_X^j(t)$, which is equivalent to every Snow partition with $t>0$ satisfying $i+j\leq N$.

Set $d_\lambda(t)=\dim L_{\beta_\lambda(t)}^\vee$, so $h^{j,i}(t)=\sum_\lambda d_\lambda(t)$ with the sum over all Snow partitions with parameters $(k,n,j,t,i)$. Our main results are two formulae for computing $d_\lambda(t)$ for every Snow partition (previously only special cases were known, such as $t=2$ in \cite{Sno86}). In the following we state the results for $t>0$, but our proofs allow for all integers $t$ by extending some combinatorial definitions (in any case, Serre duality implies that $h^{j,i}(t)=h^{N-j,N-i}(-t)$).

The first formula for $d_\lambda(t)$ is based on a novel integer-valued hook product statistic on bounded partitions (\Cref{def:t-hook-ratio}), which is nonzero if and only if $\lambda$ is $t$-core. Let $\lambda^\C:=(n-k-\lambda_k,\dots,n-k-\lambda_1)$ denote the complement partition to $\lambda$ in its $k\times (n-k)$ bounding rectangle, and $h_\lambda(a,b)$ the hook length of the box in row $a$ and column $b$ of $\lambda$.

\begin{theorem}\label{thm:dimension-formula-intro}
For a Snow partition $\lambda$ with $t>0$,
\[d_\lambda(t)=(-1)^{i+j}\prod_{(a,b)\in\lambda}\frac{h_{\lambda}(a,b)-t}{h_\lambda(a,b)}\prod_{(a,b)\in\lambda^\C}\frac{h_{\lambda^\C}(a,b)+t}{h_{\lambda^\C}(a,b)}.\]
\end{theorem}

The second formula for $d_\lambda(t)$ follows from explicitly calculating $\beta_\lambda(t)$, and observing that $L_{\beta_\lambda(t)}^\vee$ is a polynomial representation of $\GL_n$ when $t>0$ (concretely, all entries of $\beta_\lambda(t)$ are nonpositive), so $d_\lambda(t)$ is enumerated by semistandard Young tableaux.

Given a Snow partition $\lambda$ with $t>0$, let $\partial_\lambda(t)_a$ for $1\leq a\leq k$ be the number of boxes in row $a$ of $\lambda$ with hook length less than $t$.

\begin{theorem}\label{thm:gamma-dimension-intro}
For a Snow partition $\lambda$ with $t>0$, let $\gamma$ be the weakly decreasing sequence of integers with entries given by the multiset union
\[\{\partial_\lambda(t)_a\mid 1\leq a\leq k\}\cup\{t-\partial_{\lambda^\T}(t)_b\mid 1\leq b\leq n-k\}.\]
Then $\gamma$ is an $(n,t)$-bounded partition of size $(n-k)t$, and $d_\lambda(t)$ is equal to the number of semistandard Young tableaux of shape $\gamma$ with entries $1,\dots,n$.
\end{theorem}

We apply these formulae to compute Hodge numbers in specific cases. This involves first finding all Snow partitions $\lambda$ for a given set of parameters, and then calculating $d_\lambda(t)$ for each of these. For example, when $j=0$ the only Snow partition is $\lambda=(0,\dots,0)$ with $i=0$, and comparing the calculation from \Cref{thm:dimension-formula-intro} with a formula of MacMahon \cite{Mac86} gives that $h^{0,0}(t)$ is equal to the number of plane partitions bounded in a $k\times(n-k)\times t$ box.

We also examine the cases $i+j=N$ and $i+j=N-1$, which are extremal cases for Nakano vanishing. These relate closely to ongoing work generalizing that of Fatighenti and Mongardi \cite{Fat21} who construct a Griffiths-type ring for the cohomology of a smooth complex hypersurface $Z\subset X=\mathrm{Gr}(k,n)$, under the assumptions that $H^i(\Omega_X^{N-i}(pZ))=0$ and $H^i(\Omega_X^{N-i-1}(pZ))=0$ for all $i,p>0$. \Cref{thm:i+j=N,thm:i+j=N-1} provide a complete classification of when these vanishings occur. 

Applying Stembridge's notion of $q$-dimension \cite{Stem94} to representations of $\GL_n$, we prove the following $q$-analogue of \Cref{thm:dimension-formula-intro} in \Cref{sec:q}. If
\[[m]_q:=\frac{q^{m/2}-q^{-m/2}}{q^{1/2}-q^{-1/2}}\]
is the (symmetrized) $q$-analogue of an integer $m$, then for every Snow partition $\lambda$ with $t>0$:
\[\dim_q L_{\beta_\lambda(t)}^\vee=(-1)^{i+j}\prod_{(a,b)\in\lambda}\frac{[h_{\lambda}(a,b)-t]_q}{[h_\lambda(a,b)]_q}\prod_{(a,b)\in\lambda^\C}\frac{[h_{\lambda^\C}(a,b)+t]_q}{[h_{\lambda^\C}(a,b)]_q}.\]

In \Cref{sec:euler} we give an algebraic approach for the computation of the Euler characteristic $\chi(\Omega_X^j(t))$, and discuss the combinatorial implications for bounded partitions. \Cref{con:q-euler-characteristic} suggests a $q$-analogue for this formula based on the $q$-binomial coefficient.

\subsection*{Acknowledgments} The authors wish to thank Benjamin Young and Nicolas Addington for their helpful suggestions and supportive comments.

\section{Background}

\subsection{Sequences}
For sequences $a=(a_1,\dots,a_n)\in\mathbb{Z}^n$ and $b=(b_1,\dots,b_m)\in\Z^m$, we let $a\smile b:=(a_1,\dots,a_n,b_1,\dots,b_m)\in\Z^{n+m}$ denote concatenation. For $t\in\mathbb{Z}$, let $a+t:=(a_1+t,\dots,a_n+t)$. If $m=n$, define $a+b:=(a_1+b_1,\dots,a_n+b_n)$. We also define $-a:=(-a_1,\dots,-a_n)$ and $\mathrm{rev}(a):=(a_n,\dots,a_1)$.

We sometimes represent sequences in exponential notation, with the exponent denoting a repeated entry. For example:
\[(1,1,1,1,0,0,5,5,5,0)=(1^4,0^2,5^3,0^1).\]

\subsection{Permutations}
For $n\geq 1$, let $\mathfrak{S}_n$ denote the symmetric group on $n$ letters, whose elements (known as \emph{permutations}) are bijections on $\{1,\dots,n\}$. We express the permutation $\sigma\in\mathfrak{S}_n$ by its \emph{one-line notation} $\sigma(1)\sigma(2)\cdots\sigma(n)$. An \emph{inversion} is a pair $(a,b)$ with $1\leq a<b\leq n$ and $\sigma(a)>\sigma(b)$. This inversion pair is a \emph{descent} (at $\ell$) if $(a,b)=(\ell,\ell+1)$. A permutation is \emph{Grassmannian} if it has at most one descent. The \emph{Coxeter length} of a permutation is equal to its total number of inversions. For more on the Coxeter theory of permutations and its connections to representation theory, see \cite{CoCG}.

We let $\mathfrak{S}_n$ act on the sequence $a\in\mathbb{Z}^n$ by $\sigma(a)=(a_{\sigma(1)},\dots,a_{\sigma(n)})$.

\subsection{Bounded partitions}
\label{sec:partitions}

The primary objects of our combinatorial study are \emph{bounded partitions}, which include their bounding rectangle as part of their combinatorial data. This additional information is essential for some of our definitions.

Fix integers $1\leq k< n$. A $(k,n-k)$-\emph{bounded partition} (or simply, a bounded partition when $k$ and $n$ are understood) is an integer sequence $\lambda=(\lambda_1,\dots,\lambda_k)$ such that
\[n-k\geq\lambda_1\geq\cdots\geq\lambda_k\geq 0.\]
In some cases we extend our definition to allow $n=\infty$, and simply refer to $k$-bounded partitions.

Consider the rectangular grid of boxes $(a,b)$ for $1\leq a\leq k$ and $1\leq b\leq n-k$ specified in matrix coordinates. Let $N:=k(n-k)$ denote the total number of boxes. We say $(a,b)\in\lambda$ if $b\leq \lambda_a$, and draw the \emph{diagram} for $\lambda$ by outlining all such boxes.

\begin{example}
The $(3,8)$-bounded partition $\lambda=(4,1,0)$ has diagram:
\[\begin{tikzpicture}[scale=0.6]
\draw [black!50!white] (0,0) grid (5,-3);
\draw [thick] (0,0) grid (4,-1);
\draw [thick] (0,-1) grid (1,-2);
\end{tikzpicture}\]
\end{example}

The \emph{transpose} of $\lambda$ is the $(n-k,k)$-bounded partition $\lambda^\T$ whose diagram is obtained by reflecting the diagram for $\lambda$ along $(a,b)\mapsto (b,a)$. The \emph{complement} $\lambda^\C$ is the $(k,n-k)$-bounded partition obtained by rotating the bounding box of $\lambda$ by $180^\circ$ and taking the set-theoretic complement of $\lambda$. That is,
\[\lambda^\C=(n-k-\lambda_k,\dots,n-k-\lambda_1).\]
For $1\leq a\leq k$ and $1\leq b\leq n-k$, define the \emph{hook length} of the box $(a,b)$ by
\[h_\lambda(a,b):=\lambda_a+\lambda_b^\T-a-b+1.\]
We make special note that this definition applies to boxes both inside and outside of $\lambda$. When $(a,b)\in\lambda$, $h_\lambda(a,b)$ is the length of the south-east hook originating from the box $(a,b)$ to the boundary of $\lambda$. Otherwise, the hook length is negative, and its magnitude is the associated hook length in $\lambda^\C$.

\begin{example}
For the $(3,8)$-bounded partition $\lambda=(4,1,0)$ we notate every box by its hook length.
\[\begin{tikzpicture}[scale=0.6]
\draw [black!50!white] (0,0) grid (5,-3);
\draw [thick] (0,0) grid (4,-1);
\draw [thick] (0,-1) grid (1,-2);

\node at (0.5,-0.5) {$5$};
\node at (1.5,-0.5) {$3$};
\node at (2.5,-0.5) {$2$};
\node at (3.5,-0.5) {$1$};
\node at (4.5,-0.5) {$-1$};

\node at (0.5,-1.5) {$1$};
\node at (1.5,-1.5) {$-1$};
\node at (2.5,-1.5) {$-2$};
\node at (3.5,-1.5) {$-3$};
\node at (4.5,-1.5) {$-5$};

\node at (0.5,-2.5) {$-1$};
\node at (1.5,-2.5) {$-3$};
\node at (2.5,-2.5) {$-4$};
\node at (3.5,-2.5) {$-5$};
\node at (4.5,-2.5) {$-7$};
\end{tikzpicture}\]
\end{example}

For \emph{any} integer $t$, a bounded partition $\lambda$ is called \emph{$t$-core} if it has no boxes of hook length $t$. Note that $\lambda$ is $(-t)$-core if and only if $\lambda^\C$ is $t$-core. By a standard argument using the abacus construction (see \cite[Theorem 2.7.40]{Gor81}), if a partition is $t$-core then it is also $(mt)$-core for every $m\geq 1$.

Define the $t$-\emph{interior} of $\lambda$ to be the $(k,n-k)$-bounded partition made up of all boxes $(a,b)$ such that $h_\lambda(a,b)\geq t$. Also let $|\lambda|:=\lambda_1+\cdots+\lambda_k$ denote the \emph{size} of a partition, which counts its boxes. We now define the objects which index the summands of the decomposition of $H^i(\Omega_X^j(t))$ into irreducible representations of $\GL_n$, as mentioned in the introduction.

\begin{definition}
Fix a tuple of integers $(k,n,j,t,i)$ such that $1\leq k<n$ and $j,i\geq 0$. A \emph{Snow partition} with \emph{parameters} $(k,n,j,t,i)$ is a $(k,n-k)$-bounded partition with $|\lambda|=j$, which is $t$-core with $|\iota_\lambda(t)|=i$.
\end{definition}

We are interested in a classification of the parameters $(k,n,j,t,i)$ for which such partitions exist, as these are the parameters for which $h^{j,i}(t)>0$ (see \Cref{thm:cohomology-iso}). It is clear that $0\leq i,j\leq N$, so $h^{j,i}(t)=0$ if $j>N$ or $i>N$. If $t\geq 0$, then $\iota_\lambda(t)\subseteq \lambda$, so $i\leq j$. If $t\leq 0$, then $\iota_\lambda(t)\supseteq\lambda$ so $i\geq j$. Since the only $1$-core partition is empty, if $t=1$ we must have $i=j=0$.

\begin{definition}\label{def:delta}
For a Snow partition $\lambda$ with $t\geq 0$, set $\partial_\lambda(t):=\lambda-\iota_\lambda(t)$.
\end{definition}

It turns out that $\partial_\lambda(t)$ is always a partition. It is obtained by taking the boxes in the $t$-\emph{boundary} skew partition $\lambda/\iota_\lambda(t)$ and sliding them to the left. Moreover, the map $\lambda\mapsto\partial_\lambda(t)$ can be inverted.

\begin{lemma}[{\cite[Theorem 7]{Lap04}}]\label{lem:bounded-bijection}
For $t>0$, the map $\lambda\mapsto\partial_\lambda(t)$ is a bijection between $k$-bounded $t$-core partitions and $(k,t-1)$-bounded partitions.
\end{lemma}

\begin{figure}[H]
\[\begin{tikzpicture}[scale = 0.6]
\draw [black!50!white] (0,0) grid (6,-5);
\fill [intrColor] (0,0) -- (3,0) -- (3,-1) -- (1,-1) -- (1,-3) -- (0,-3) -- cycle;
\draw [thick] (0,0) grid (6,-1);
\draw [thick] (0,-1) grid (3,-3);
\draw [thick] (0,-3) grid (1,-5);
\node [above] at (3,0) {$\lambda$};

\draw [black!50!white] (-4,0) grid (-1,-5);
\draw [thick] (-4,0) grid (-1,-1);
\draw [thick] (-4,-1) grid (-2,-3);
\draw [thick] (-4,-3) grid (-3,-5);
\node [above] at (-2.5,0) {$\partial_\lambda(4)$};
\end{tikzpicture}\]
\caption{A Snow partition $\lambda=(6,3,3,1,1)$ for the parameters $k=5$, $n=11$, $j=14$, $t=4$ and $i=5$, with its $4$-interior $\iota_\lambda(4)=(3,1,1,0,0)$ shaded. The $(5,3)$-bounded partition $\partial_\lambda(4)$ is on the left.}
\label{fig:partial}
\end{figure}

\subsection{Tableaux}
\label{sec:tableaux}

For a partition $\lambda$, a \emph{tableau} of shape $\lambda$ is a labeling of the boxes of $\lambda$ by positive integers. The size $|T|$ of a tableau $T$ is the sum of its labels. We give some basic enumerations of classes of tableaux below, and refer to \cite{Stan99} for the general theory.

Fix a partition $\lambda$ of size $j$. For an integer $m\geq 1$, a \emph{semistandard Young tableau} is a labeling of the boxes of $\lambda$ by the integers $1,\dots,m$ such that the labels increase weakly along rows and strictly down columns. The number of such semistandard Young tableaux can be counted as
\begin{equation}\label{eq:SSYT}\#\SSYT(\lambda;m)=\prod_{1\leq p<q\leq m}\frac{\lambda_p-\lambda_q+q-p}{q-p}=\prod_{(a,b)\in\lambda}\frac{m+b-a}{h_\lambda(a,b)}.\end{equation}

The \emph{content} of a semistandard tableau is the sequence $\mu=(\mu_1,\dots,\mu_m)$ where $\mu_\ell$ is the number of boxes with label $\ell$. The generating function for semistandard Young tableaux weighted by content is the \emph{Schur polynomial}
\[s_\lambda(x_1,\dots,x_m)=\sum_{\mu\in\mathbb{Z}^m}\#\SSYT(\lambda;\mu)\,x_1^{\mu_1}\cdots x_m^{\mu_m}\]
where $\SSYT(\lambda;\mu)$ is the set of semistandard Young tableaux of shape $\lambda$ and content $\mu$. Schur polynomials are symmetric in the variables $x_1,\dots,x_m$.

\begin{example}
If $\lambda=(2,2)$, the elements of $\SSYT(\lambda;3)$ are
\[\begin{tikzpicture}[scale = 0.4]
\draw (0,0) grid (2,-2);
\node at (0.5,-0.5) {$1$};
\node at (1.5,-0.5) {$1$};
\node at (0.5,-1.5) {$2$};
\node at (1.5,-1.5) {$2$};

\begin{scope}[shift = {(3,0)}]
\draw (0,0) grid (2,-2);
\node at (0.5,-0.5) {$1$};
\node at (1.5,-0.5) {$1$};
\node at (0.5,-1.5) {$2$};
\node at (1.5,-1.5) {$3$};
\end{scope}

\begin{scope}[shift = {(6,0)}]
\draw (0,0) grid (2,-2);
\node at (0.5,-0.5) {$1$};
\node at (1.5,-0.5) {$1$};
\node at (0.5,-1.5) {$3$};
\node at (1.5,-1.5) {$3$};
\end{scope}

\begin{scope}[shift = {(9,0)}]
\draw (0,0) grid (2,-2);
\node at (0.5,-0.5) {$1$};
\node at (1.5,-0.5) {$2$};
\node at (0.5,-1.5) {$2$};
\node at (1.5,-1.5) {$3$};
\end{scope}

\begin{scope}[shift = {(12,0)}]
\draw (0,0) grid (2,-2);
\node at (0.5,-0.5) {$1$};
\node at (1.5,-0.5) {$2$};
\node at (0.5,-1.5) {$3$};
\node at (1.5,-1.5) {$3$};
\end{scope}

\begin{scope}[shift = {(15,0)}]
\draw (0,0) grid (2,-2);
\node at (0.5,-0.5) {$2$};
\node at (1.5,-0.5) {$2$};
\node at (0.5,-1.5) {$3$};
\node at (1.5,-1.5) {$3$};
\end{scope}
\end{tikzpicture}\]
so $s_\lambda(x_1,x_2,x_3)=x_1^2x_2^2+x_1^2x_2x_3+x_1^2x_3^2+x_1 x_2^2x_3+x_1x_2x_3^2+x_2^2x_3^2$.
\end{example}

A \emph{plane partition} is a tableau where the labels weakly decrease both along rows and down columns. We call a plane partition $(k_1,k_2,k_3)$-bounded if its shape $\lambda$ is $(k_1,k_2)$-bounded and every label is at most $k_3$. MacMahon in \cite{Mac86} proved that the set of such plane partitions is enumerated by
\[\#\mathrm{PP}(k_1,k_2,k_3)=\prod_{a=1}^{k_1}\prod_{b=1}^{k_2}\frac{a+b+k_3-1}{a+b-1}.\]

\subsection{Representation theory}\label{sec:rep-theory}

We recall some of the standard Lie theory in Type A, including a classification of (finite dimensional) rational representations for the matrix Lie group $\mathrm{GL}_n$ of invertible matrices over $\mathbb{C}$. The general theory can be found in \cite{Hum72} and \cite{Ful91}.

Given a finite dimensional complex vector space $V$ of dimension $m$ and an action of $\GL_n$ on $V$, we can construct an explicit homomorphism $\GL_n\to\GL_m$ by choosing a basis for $V$. We call the representation $V$ \emph{rational} (resp. \emph{polynomial}) if this map is a rational function (resp. polynomial) in terms of the entries of the input matrix. This classification is independent of the choice of basis on $V$.

The \emph{weight lattice} of $\GL_n$ is identified with $\Z^n$. We let $e_p$ be the $p^\text{th}$ standard basis vector and $\langle-,-\rangle$ the usual inner product. Every rational representation $V$ can be decomposed into \emph{weight spaces} as $V=\bigoplus_{\mu\in\Z^n}V[\mu]$, where $V[\mu]$ is the subspace of elements $v\in V$ such that every diagonal matrix acts by
\[\mathrm{diag}(x_1,\dots,x_n)\cdot v=x_1^{\mu_1}\cdots x_n^{\mu_n}v.\]
The \emph{Weyl group} of $\GL_n$ is $\mathfrak{S}_n$. It acts by linear isomorphism on the weight spaces, so $\dim V[\sigma(\mu)]=\dim V[\mu]$ for every $\mu\in\Z^n$ and $\sigma\in\mathfrak{S}_n$. The dual representation $V^\vee$ satisfies $\dim V^\vee[\mu]=\dim V[-\mu]$ for every $\mu\in\mathbb{Z}^n$ (see \cite[\S0.2]{Kop84} or \cite[Theorem IV.5.A]{Weyl39}). 

The \emph{positive roots} of $\GL_n$ are $\Phi^+:=\{e_r-e_s\mid 1\leq r<s\leq n\}$ and we define a partial order on $\mathbb{Z}^n$ by $\mu\preceq\lambda$ if $\lambda-\mu$ is a sum of positive roots. The \emph{Weyl vector} is $\varrho=\frac{1}{2}\sum_{\phi\in\Phi^+}\phi$, or explicitly,
\[\varrho=\left(\frac{n-1}{2},\frac{n-3}{2},\dots,\frac{-n+1}{2}\right).\]
Every finite dimensional rational representation of $\GL_n$ is a direct sum of irreducible representations. The isomorphism classes of irreducible representations of $\GL_n$ are indexed by the set of \emph{dominant integral weights}
\[\Lambda_n:=\{\beta\in\Z^n\mid \beta_1\geq\beta_2\geq\cdots\geq\beta_n\}.\]
The irreducible representation $L_\beta$ associated to $\beta$ is a \emph{highest-weight representation} for $\beta$, meaning $\dim L_\beta[\beta]=1$ and $\dim L_\beta[\mu]=0$ unless $\mu\preceq\beta$. We provide an explicit construction of these representations in \Cref{sec:schur}. Below are some common examples:
\begin{center}
\begin{tabular}{l l}
$\beta$ & $L_\beta$ \\
\hline
$(0^n)$ & trivial representation\\
$(1^n)$ & determinant representation\\
$(1^m, \,0^{n-m})$ & ${\bigwedge}^m\mathbb{C}^n$ \\
$(m,\,0^{n-1})$ & $\mathrm{Sym}^m\mathbb{C}^n$
\end{tabular}
\end{center}
The dimension of $L_\beta$ is given by the Weyl dimension formula:
\[\dim L_\beta=\prod_{\phi\in\Phi^+}\frac{\langle \phi,\beta+\varrho\rangle}{\langle\phi,\varrho\rangle}=
\prod_{1\leq r<s\leq n}\frac{\beta_r-\beta_s+s-r}{s-r}.\]
\begin{remark}
    For most of the paper we will replace $\varrho$ by the \emph{shifted Weyl vector} $\varrho'=(n,\dots,2,1)$ for ease in calculations. It is often the case that the choice between $\varrho$ and $\varrho'$ is inconsequential, such as when using the Weyl dimension formula.
\end{remark}
The representation $L_\beta$ is polynomial if and only if $\beta$ belongs to the set
\[\Lambda_n^+:=\{\lambda\in\mathbb{Z}^n\mid \lambda_1\geq\cdots\geq\lambda_n\geq 0\},\]
which we identify with the set of $n$-bounded partitions. For a weight $\lambda\in\Lambda_{n}^+$ we have $\dim L_\lambda=\#\mathrm{SSYT}(\lambda;n)$, and more precisely, $\dim L_\lambda[\mu]=\#\SSYT(\lambda;\mu)$ for every weight $\mu\in\mathbb{Z}^n$.

\begin{remark}
Because $\lambda$ is a \emph{bounded partition}, there is no ambiguity about which group is acting on $L_\lambda$ (or $L_\beta$). For example, $L_{(2,1)}$ is a representation of $\GL_2$, and $L_{(2,1,0,0)}$ is a representation of $\GL_4$.
\end{remark}

We state some relationships amongst the irreducible representations $L_\beta$ which are relevant in this work. The dual of $L_\beta$ is isomorphic to $L_{-\mathrm{rev}(\beta)}$, and
\begin{equation}\label{eq:tensor-shift}
L_\beta\otimes L_{(t^n)}\cong L_{\beta+t}
\end{equation}
for any $\beta\in\Lambda_n$ and $t\in\mathbb{Z}$ as a special case of the Littlewood--Richardson rule. Finally, we have the Cauchy decomposition \cite[Exercise 6.11]{Ful91} which gives the isomorphism of representations of $\GL_k\times\GL_{n-k}$:
\begin{equation}\label{eq:cauchy decomposition}
{\bigwedge}^j\left(\mathbb{C}^k\otimes \mathbb{C}^{n-k}\right)\cong \bigoplus_{|\lambda|=j} L_\lambda\otimes L_{\lambda^\T}
\end{equation}
where $\lambda$ runs over all $(k,n-k)$-bounded partitions of size $j$.

\subsection{Schur Functors}\label{sec:schur}
We give an explicit construction of the irreducible $\GL_n$ representation $L_\beta$ for any $\beta\in\Lambda_n$. Importantly, this construction is \emph{functorial}, and will be applied to vector bundles in the setup of the Borel--Weil--Bott theorem.

For every $\beta\in\Lambda_n$, we construct a functor $\Sigma^\beta$ from the category of rational representations of $\mathrm{GL}_n$ to itself, such that $\Sigma^\beta(\mathbb{C}^n)\cong L_\beta$ for every $\beta\in\Lambda_n$, where $\mathbb{C}^n$ is equipped with the standard action. For a more detailed version of this construction, see \cite[\S6]{Ful91}.

We start with $\lambda\in\Lambda_n^+$, so $\lambda$ is an $n$-bounded partition of some size $j$. Consider the standard tableau $T_\lambda$ obtained by labeling the boxes of $\lambda$ by $1,\dots,j$ row-by-row from left to right, then top to bottom. Define subgroups of $\mathfrak{S}_j$ by:
\begin{align*}
P_\lambda&:=\{\sigma\in\mathfrak{S}_j\mid \text{$\sigma$ preserves the rows of $T_\lambda$}\} \\
Q_\lambda&:=\{\sigma\in\mathfrak{S}_j\mid \text{$\sigma$ preserves the columns of $T_\lambda$}\}
\end{align*}
The group algebra $\mathbb{C}[\mathfrak{S}_j]$ consists of formal $\mathbb{C}$-linear combinations of the elements of $\mathfrak{S}_j$. Let $\ell(\sigma)$ be the Coxeter length of each permutation $\sigma\in\mathfrak{S}_j$, and define the \emph{Young symmetrizer}
\[c_\lambda=\sum_{\sigma\in P_\lambda}\sigma\sum_{\tau\in Q_\lambda}(-1)^{\ell(\tau)}\tau\in\mathbb{C}[\mathfrak{S}_j].\]
Let $V$ be a rational representation of $\GL_n$. The space $V^{\otimes j}:=V\otimes\cdots\otimes V$ has a left action of $\GL_n$ (acting on each factor simultaneously) and a right action of $\mathfrak{S}_j$ (permuting the tensor factors). These actions commute, and we define $\Sigma^\lambda(V)$ as the image of the action of $c_\lambda$ on $V^{\otimes j}$. To obtain all dominant integral weights $\beta\in\Lambda_n$ we use the definition
\[\Sigma^\beta(V):=\Sigma^{\beta+r}(V)\otimes \Sigma^{(r^n)}(V^\vee)\]
for $r\geq 0$. This is well-defined, and we can use this to ``translate'' any dominant integral weight to one with nonnegative values by making $r$ sufficiently large.

We state a number of facts about this construction, which mirror isomorphisms between the irreducible representations $L_\beta$. Applying $\Sigma^\beta$ to the dual of $V$ gives $\Sigma^\beta(V^\vee)\cong\Sigma^\beta(V)^\vee\cong\Sigma^{-\mathrm{rev}(\beta)}(V)$, and $\Sigma^{\beta+t}(V)\cong\Sigma^\beta(V)\otimes\Sigma^{(t^n)}(V)$. We also have the Cauchy decomposition (see (2.3.3) in \cite{Wey03}): for $V$ and $W$ representations of $\GL_k$ and $\GL_{n-k}$, respectively,
\begin{equation}\label{eq:schur cauchy decomposition}
{\bigwedge}^j\left(V\otimes W\right)\cong \bigoplus_{|\lambda|=j} \Sigma^\lambda(V)\otimes \Sigma^{\lambda^\T}(W)
\end{equation}
as representations of $\GL_k\times \GL_{n-k}$, where the sum is over all $(k,n-k)$-bounded partitions $\lambda$ of size $j$. Finally, if $V=\mathbb{C}^n$ is the standard representation of $\GL_n$, then $\Sigma^\beta(V)\cong L_\beta$ \cite{Kop84}.

This construction can be extended to vector bundles (see \cite{Wey03}, and note that their notation $L_{\lambda}E$ corresponds to our $\Sigma^{\lambda^\T}(E)$).

\section{Computation of the Hodge numbers}\label{sec:hodge-formula}

\subsection{BWB for the Grassmannian}\label{sec:BWB}

The Borel--Weil--Bott (BWB) theorem allows us to decompose the cohomology of twisted holomorphic $j$-forms on the Grassmannian into irreducible representations of $\GL_n$. Our construction follows \cite[\S4.1]{Wey03}; an approachable account of this is available in \cite[Theorem A.9]{Nik25}.

Fix $V=\mathbb C^n$, equipped with the standard action of $\mathrm{GL}_n$. The Grassmannian $\mathrm{Gr}(k,V)$ of $k$-dimensional subspaces of $V$ is denoted by $X=\mathrm{Gr}(k,n)$. The structure sheaf of $X$ is denoted $\mathcal O_X$, and the sheaf of holomorphic $j$-forms on $X$ is denoted $\Omega_X^j$. For an overview of complex geometry in this setting, see \cite[\S10.1.1]{Voi01}. Consider the tautological sequence 
\begin{equation*}
0\to S\to \mathcal O_X\otimes V\to Q\to 0
\end{equation*}
for $S$ and $Q$ the tautological subbundle and quotient bundle, respectively. The dual of a bundle $E$ is denoted $E^\vee$. Recall that $\Omega_X\cong S\otimes Q^\vee$. There is an isomorphism $\mathcal O_X(1)\cong\bigwedge^{n-k} Q= \det Q$, and we define $\mathcal O_X(d):=\mathcal O_X(1)^{\otimes d}$. Let $\Omega_X^j := \bigwedge^j \Omega_X
$ and $\Omega_X^j(t):= \Omega_X^j\otimes \mathcal O_X(t)$. Applying \Cref{eq:schur cauchy decomposition} to the twisted cotangent bundle $\Omega_X^j(t)$ gives
\begin{align}\label{thm:wedge of product omega}\Omega^{j}_{X}(t)&\cong \bigoplus_{\lambda} \Sigma^{\lambda} S\otimes \Sigma^{\lambda^\T}(Q^\vee)\otimes \Sigma^{(t^{n-k})}(Q)\\
    &\cong \bigoplus_{\lambda} \Sigma^{\lambda} S\otimes \Sigma^{\lambda^\T-t} (Q^\vee)\notag\\
    &\cong \bigoplus_{\lambda} \Sigma^{-\mathrm{rev}(\lambda) } (S^\vee)\otimes \Sigma^{\lambda^\T-t} (Q^\vee)\notag
\end{align}
with the sum over all $(k,n-k)$-bounded partitions $\lambda$ of size $j$. Under this isomorphism $\Omega_X^j(t)$ is a representation of $\GL_k\times\GL_{n-k}$. The weight of such a representation can be described by a \emph{Levi dominant weight} $\alpha=\delta\smile\epsilon\in\Z^n$ where $\delta\in\Lambda_k$ and $\epsilon\in\Lambda_{n-k}$. In particular, $\alpha$ satisfies the property that $\alpha_i\geq\alpha_{i+1}$ for all $i\neq k$.

We now state the Borel--Weil--Bott theorem for the Grassmannian. Recall that $V=\mathbb{C}^n$ has the standard $\GL_n$ action and $\varrho'=(n,\dots,1)$.
\begin{theorem}[Borel--Weil--Bott, adapted from {\cite[Corollary 4.1.9]{Wey03}}]\label{thm:bwb}
    Set $X=\Gr(k,n)$ and suppose $\alpha=\delta\smile\epsilon\in\mathbb{Z}^n$ is a Levi dominant weight, where $\delta\in\Lambda_k$ and $\epsilon\in\Lambda_{n-k}$. Let $\mathcal V(\alpha)$ denote the vector bundle
    \[\mathcal V(\alpha) = \Sigma^{\delta}(S^\vee) \otimes \Sigma^{\epsilon} (Q^\vee).
    \]
    Then one of two mutually exclusive possibilities occurs:
    \begin{enumerate}
        \item $\alpha + \varrho'$ has repeated entries, and the cohomology of $\mathcal V(\alpha)$ vanishes; or
        \item let $\sigma\in\mathfrak{S}_n$ be the (unique) permutation which orders $\alpha+\varrho'$ into strictly decreasing order, and $\ell$ its Coxeter length. Set $\beta:=\sigma(\alpha+\varrho')-\varrho'\in\Lambda_n$. Then the cohomology of $\mathcal V(\alpha)$ is zero in all degrees except $\ell$, and
\[H^\ell(X,\mathcal{V}(\alpha))\cong\Sigma^{\beta}(V^\vee)\cong L_{\beta}^\vee.\]
    \end{enumerate}
\end{theorem}

\begin{remark}
    The notation used in \cite{Wey03} describes $H^\ell(X,\mathcal V(\alpha))$ in terms of the (skew) Weyl functor $K_{\beta}$ rather than the (skew) Schur functor; these two functors are isomorphic over characteristic zero \cite[Proposition 2.1.18]{Wey03}.
\end{remark}

We will adapt this theorem for $\Omega_X^j(t)$ through \Cref{thm:wedge of product omega}. We first define the Levi dominant weight $\alpha$ which corresponds to each partition $\lambda$ indexing the direct sum.

\begin{definition}\label{def:alpha}
For a $(k,n-k)$-bounded partition $\lambda$ and an integer $t$, set
\[\alpha_\lambda(t):=-\mathrm{rev}(\lambda)\smile(\lambda^\T-t)\in\mathbb{Z}^n.\]
\end{definition}

\begin{lemma}\label{lem:unique-entries}
The sequence $\alpha_\lambda(t)+\varrho'$ has unique entries if and only if $\lambda$ is $t$-core. In this case, the permutation $\sigma$ which reorders $\alpha_\lambda(t)+\varrho'$ in descending order has Coxeter length $\ell=|\iota_\lambda(t)|$.
\end{lemma}
\begin{proof}
Since $-\mathrm{rev}(\lambda)$ and $\lambda^\T-t$ are both weakly decreasing and $\varrho'$ is strictly decreasing, we only need to compare between the first $k$ and last $n-k$ terms of $\alpha+\varrho'$. For $1\leq a\leq k$ and $1\leq b\leq n-k$, we have
\[(\alpha+\varrho')_{k-a+1}=-\lambda_a+n-k+a\text{ and }(\alpha+\varrho')_{k+b}=\lambda_b^\T-t+n-k-b+1.\]
By rearranging, these entries are equal if and only if
\[\lambda_a+\lambda_b^\T-a-b+1=t,\]
so $\lambda$ has a hook length of $t$. The Coxeter length of $\sigma$ is the number of pairs of entries in $\alpha_\lambda(t)+\varrho'$ which are already in ascending order. This happens precisely when $(\alpha+\varrho')_{k-a+1}<(\alpha+\varrho')_{k+b}$, which is equivalent to $h_{\lambda}(a,b)>t$ by the above calculation.
\end{proof}

\begin{definition}\label{def:beta}
For a bounded $t$-core partition $\lambda$, define the \emph{BWB weight}
\[\beta_\lambda(t):=\sigma(\alpha_\lambda(t)+\varrho')-\varrho'\in\Lambda_n\]
and set $d_\lambda(t):=\dim L_{\beta_\lambda(t)}^\vee$. 
\end{definition}
We now simplify the statement of \Cref{thm:bwb}. This directly implies Snow's combinatorial condition on the positivity of Hodge numbers \cite[\S3.1]{Sno86}.

\begin{theorem}\label{thm:cohomology-iso}
Fix integers $(k,n,j,t,i)$ such that $1\leq k< n$ and $j,i\geq 0$. Then
\[H^i(\Gr(k,n),\Omega^j(t))\cong\bigoplus_\lambda L_{\beta_\lambda(t)}^\vee\]
as representations of $\GL_n$, with the direct sum over all Snow partitions $\lambda$ with parameters $(k,n,j,t,i)$. In particular, $h^{j,i}(t)=\sum_\lambda d_\lambda(t)$.
\end{theorem}

\subsection{The BWB weight}

Fix a Snow partition $\lambda$ with parameters $(k,n,j,t,i)$. In this section we explicitly determine the permutation $\sigma$ which reorders $\alpha_\lambda(t)+\varrho'$, and hence the BWB weight $\beta_\lambda(t)\in\Lambda_n$. This gives a formula for $d_\lambda(t)$ based on semistandard Young tableaux. In the next section we use a different approach to obtain the explicit formula for $d_\lambda(t)$ from \Cref{thm:dimension-formula-intro}.

\begin{lemma}\label{lem:sigma}
For a bounded $t$-core partition $\lambda$, the permutation $\sigma$ which rearranges $\alpha_\lambda(t)+\varrho'$ into descending order is given by
\[\sigma(k-a+1)=k-a+1+\iota_\lambda(t)_a\text{ and }\sigma(k+b)=k+b-\iota_\lambda(t)_b^\T\]
for every $1\leq a\leq k$ and $1\leq b\leq n-k$.
\end{lemma}

\begin{proof}
We consider $\sigma$ as a sequence of adjacent transpositions which swap ascending pairs, until the sequence is weakly decreasing. 

The entry $(\alpha+\varrho')_{k}$ swaps with every entry $(\alpha+\varrho')_{k+b}$ such that $(\alpha+\varrho')_{k}<(\alpha+\varrho')_{k+b}$, or equivalently, $h_\lambda(1,b)>t$ by the proof of \Cref{lem:unique-entries}, so it moves to the right $\iota_\lambda(t)_1$ times. Next, $(\alpha+\varrho')_{k-1}$ swaps with every entry $(\alpha+\varrho')_{k+b}$ such that $h_\lambda(2,b)>t$, so it moves to the right $\iota_\lambda(t)_2$ times, and so on.

Alternatively, we can first swap $(\alpha+\varrho')_{k+1}$ with every $(\alpha+\varrho')_{k-a+1}$ such that $(\alpha+\varrho')_{k+1}>(\alpha+\varrho')_{k-a+1}$, or equivalently $h_\lambda(a,1)>t$, which moves it to the left $\iota_\lambda(t)_1^\T$ times. Then $(\alpha+\varrho')_{k+2}$ moves to the left $\iota_\lambda(t)_2^\T$ times and so on.
\end{proof}

\begin{example}
Take the $(4,9)$-bounded partition $\lambda=(5,2,2,1)$, which we consider as both a $0$-core and $5$-core partition. We have $\alpha_\lambda(0)+\varrho'=(8,6,5,1,9,7,4,3,2)$ with $\sigma=245913678$, which records that $8$ is the second largest entry of $\alpha_\lambda(0)+\varrho'$, $6$ is the fourth largest, and so on. Similarly, $\alpha_\lambda(5)+\varrho'=(8,6,5,1,4,2,-1,-2,-3)$ with $\sigma=123645789$.

In \Cref{fig:permutation} we draw the crossing diagram for both permutations. Drawing a rotated box around each crossing recovers the shapes $\iota_\lambda(0)=\lambda$ and $\iota_\lambda(5)$.
\begin{figure}[ht]
\begin{center}
\begin{tikzpicture}[scale = 0.6]
\draw [thick, black, fill=intrColor] (0.5,-3.5) -- (1.5,-2.5) -- (2.5,-3.5) -- (1.5,-4.5) -- cycle;
\draw [thick, black, fill=intrColor] (1.5,-2.5) -- (2.5,-1.5) -- (3.5,-2.5) -- (2.5,-3.5) -- cycle;
\draw [thick, black, fill=intrColor] (2.5,-1.5) -- (3.5,-0.5) -- (4.5,-1.5) -- (3.5,-2.5) -- cycle;
\draw [thick, black, fill=intrColor] (3.5,-0.5) -- (4.5,0.5) -- (5.5,-0.5) -- (4.5,-1.5) -- cycle;
\draw [thick, black, fill=intrColor] (1.5,-4.5) -- (2.5,-3.5) -- (3.5,-4.5) -- (2.5,-5.5) -- cycle;
\draw [thick, black, fill=intrColor] (2.5,-5.5) -- (3.5,-4.5) -- (4.5,-5.5) -- (3.5,-6.5) -- cycle;
\draw [thick, black, fill=intrColor] (3.5,-6.5) -- (4.5,-5.5) -- (5.5,-6.5) -- (4.5,-7.5) -- cycle;
\draw [thick, black, fill=intrColor] (4.5,-7.5) -- (5.5,-8.5) -- (6.5,-7.5) -- (5.5,-6.5) -- cycle;
\draw [thick, black, fill=intrColor] (2.5,-3.5) -- (3.5,-2.5) -- (4.5,-3.5) -- (3.5,-4.5) -- cycle;
\draw [thick, black, fill=intrColor] (3.5,-2.5) -- (4.5,-1.5) -- (5.5,-2.5) -- (4.5,-3.5) -- cycle;

\draw [thick, black, dashed] (0,0)  -- (4,0)  -- (5,-1) -- (7,-1);
\draw [thick, black, dashed] (0,-1) -- (3,-1) -- (5,-3) -- (7,-3);
\draw [thick, black, dashed] (0,-2) -- (2,-2) -- (4,-4) -- (7,-4);
\draw [thick, black, dashed] (0,-3) -- (1,-3) -- (6,-8) -- (7,-8);
\draw [thick, black, dashed] (0,-4) -- (1,-4) -- (5,0)  -- (7,0);
\draw [thick, black, dashed] (0,-5) -- (2,-5) -- (5,-2) -- (7,-2);
\draw [thick, black, dashed] (0,-6) -- (3,-6) -- (4,-5) -- (7,-5);
\draw [thick, black, dashed] (0,-7) -- (4,-7) -- (5,-6) -- (7,-6);
\draw [thick, black, dashed] (0,-8) -- (5,-8) -- (6,-7) -- (7,-7);
\foreach \i in {1,2,...,8,9} {
	\node at (-0.3,-\i+1) {$\i$};
	\node at (7.3,-\i+1) {$\i$};
}
\node at (3.5,1) {$t=0$};

\begin{scope}[shift = {(10,0)}]
\draw [thick, black, fill=intrColor] (0.5,-3.5) -- (1.5,-2.5) -- (2.5,-3.5) -- (1.5,-4.5) -- cycle;
\draw [thick, black] (1.5,-2.5) -- (2.5,-1.5) -- (3.5,-2.5) -- (2.5,-3.5) -- cycle;
\draw [thick, black] (2.5,-1.5) -- (3.5,-0.5) -- (4.5,-1.5) -- (3.5,-2.5) -- cycle;
\draw [thick, black] (3.5,-0.5) -- (4.5,0.5) -- (5.5,-0.5) -- (4.5,-1.5) -- cycle;
\draw [thick, black, fill=intrColor] (1.5,-4.5) -- (2.5,-3.5) -- (3.5,-4.5) -- (2.5,-5.5) -- cycle;
\draw [thick, black] (2.5,-5.5) -- (3.5,-4.5) -- (4.5,-5.5) -- (3.5,-6.5) -- cycle;
\draw [thick, black] (3.5,-6.5) -- (4.5,-5.5) -- (5.5,-6.5) -- (4.5,-7.5) -- cycle;
\draw [thick, black] (4.5,-7.5) -- (5.5,-8.5) -- (6.5,-7.5) -- (5.5,-6.5) -- cycle;
\draw [thick, black] (2.5,-3.5) -- (3.5,-2.5) -- (4.5,-3.5) -- (3.5,-4.5) -- cycle;
\draw [thick, black] (3.5,-2.5) -- (4.5,-1.5) -- (5.5,-2.5) -- (4.5,-3.5) -- cycle;

\draw [thick, black, dashed] (0,0)  -- (7,0);
\draw [thick, black, dashed] (0,-1) -- (7,-1);
\draw [thick, black, dashed] (0,-2) -- (7,-2);
\draw [thick, black, dashed] (0,-3) -- (1,-3) -- (3,-5) -- (7,-5);
\draw [thick, black, dashed] (0,-4) -- (1,-4) -- (2,-3) -- (7,-3);
\draw [thick, black, dashed] (0,-5) -- (2,-5) -- (3,-4)  -- (7,-4);
\draw [thick, black, dashed] (0,-6) -- (7,-6);
\draw [thick, black, dashed] (0,-7) -- (7,-7);
\draw [thick, black, dashed] (0,-8) -- (7,-8);
\foreach \i in {1,2,...,8,9} {
	\node at (-0.3,-\i+1) {$\i$};
	\node at (7.3,-\i+1) {$\i$};
}
\node at (3.5,1) {$t=5$};
\end{scope}
\end{tikzpicture}
\end{center}
\caption{The left-to-right line diagrams for the permutations rearranging $\alpha_{(5,2,2,1)}(t)+\varrho'$ into descending order when $t=0$ and $t=5$, where all crossings occur as early as possible. The diagram for $\lambda$ (vertically flipped then rotated by $45^\circ$ clockwise) is drawn on each side, with the $t$-interior shaded.}
\label{fig:permutation}
\end{figure}
\end{example}
\begin{remark}
For $t=0$, the map $\lambda\mapsto \sigma$ recovers the well-known bijection between $(k,n-k)$-bounded partitions and Grassmannian permutations in $\mathfrak{S}_n$ whose only descent (if it has one) is at $k$ (see \S19 of \cite{Pos06}).
\end{remark}

\begin{theorem}
For a bounded $t$-core partition $\lambda$, set $\iota:=\iota_\lambda(t)$. The BWB weight $\beta_\lambda(t)$ is given by rearranging the sequence
\[(-\lambda+\iota)\smile(\lambda^\T-\iota^\T-t)\]
into weakly decreasing order.
\end{theorem}
\begin{proof}
Set $\alpha:=\alpha_\lambda(t)$, $\beta:=\beta_\lambda(t)$ and recall that $\beta=\sigma(\alpha+\varrho')-\varrho'$. If $\sigma(i)=j$, then
\[\beta_j=\alpha_i+\varrho'_i-\varrho'_j=\alpha_i+(j-i).\]
By the calculation of $\sigma$ in \Cref{lem:sigma}, we have
\[\beta_{k-a+1+\iota_a}=\alpha_{k-a+1}+\iota_a=-\lambda_a+\iota_a\]
and
\[\beta_{k+b-\iota_b^\T}=\alpha_{k+b}-\iota_b^\T=\lambda_b^\T-t-\iota_b^\T\]
for every $1\leq a\leq k$ and $1\leq b\leq n-k$. Since $\sigma$ was a bijection, this gives every entry of $\beta$. These entries appear in weakly decreasing order by the definition of $\sigma$.
\end{proof}

\begin{corollary}
For a bounded $t$-core partition $\lambda$, we have $-\mathrm{rev}({\beta_\lambda(t)})=\beta_{\lambda^\C}(-t)$. If $t\geq 0$, then $-\mathrm{rev}(\beta_\lambda(t))\in\Lambda_n^+$.
\end{corollary}
\begin{proof}
We have $\iota_{\lambda^\C}(-t)=\iota_\lambda(t)^\C$. Therefore, the entries of $\beta_{\lambda^\C}(-t)$ are the union of the entries in
\[-\lambda^\C+\iota^\C=-(n-k-\mathrm{rev}(\lambda))+(n-k-\mathrm{rev}(\iota))=\mathrm{rev}(\lambda-\iota)\]
and
\[\lambda^{\C\T}-\iota^{\C\T}+t=(k-\mathrm{rev}(\lambda^\T))-(k-\mathrm{rev}(\iota^\T))+t=t-\mathrm{rev}(\lambda^\T-\iota^\T)\]
which are the entries of $-\mathrm{rev}(\beta_\lambda(t))$. Since both sequences are weakly decreasing, they must be equal. If $t>0$, the sequences $\lambda-\iota$ and $\lambda^\T-\iota^\T$ have entries in $\{0,1,\dots,t-1\}$ by \Cref{lem:bounded-bijection}, so $-\mathrm{rev}(\beta_\lambda(t))\in\Lambda_n^+$ since it has only nonnegative entries. If $t=0$, then $\iota=\lambda$ so $-\mathrm{rev}(\beta_\lambda(0))=(0^n)\in\Lambda_n^+$.
\end{proof}

As a consequence, $L_{\beta_\lambda(t)}^\vee$ is a polynomial representation for $t\geq 0$, and its dual is polynomial when $t\leq 0$ (when $t=0$ it is the trivial representation). In particular, the dimension $d_\lambda(t)$ in each case is the enumeration of semistandard tableaux of a given shape.

\begin{definition}\label{def:gamma}
For a bounded $t$-core partition $\lambda$, define $\gamma_\lambda(t)\in\Lambda_n^+$ by
\[\gamma_\lambda(t)=\begin{cases}-\mathrm{rev}(\beta_\lambda(t)) & \text{if }t\geq 0,\\\beta_\lambda(t) & \text{if }t< 0.\end{cases}\]
\end{definition}

By applying the above results, we can give an explicit enumeration for $d_\lambda(t)$ in terms of semistandard tableaux since for $t\geq 0$, $L_{\beta_\lambda(t)}^\vee\cong L_{-\mathrm{rev}(\beta_\lambda(t))}=L_{\gamma_\lambda(t)}$ is a polynomial representation. Recall the definition of the partition $\partial_\lambda(t):=\lambda-\iota_\lambda(t)$ for $t>0$ from \Cref{def:delta}. This recovers \Cref{thm:gamma-dimension-intro}.

\begin{theorem}\label{thm:gamma}
For a bounded $t$-core partition $\lambda$, $d_\lambda(t)=\#\SSYT(\gamma_\lambda(t);n)$. If $t>0$, then $\gamma_\lambda(t)$ is obtained by rearranging the sequence
\[\partial_\lambda(t)\smile (t-\partial_{\lambda^\T}(t))\]
into weakly descending order, and $\gamma_\lambda(-t)=\gamma_{\lambda^\C}(t)$. For all integers $t$, $\gamma_\lambda(t)$ is an $(n,|t|)$-bounded partition of size $(n-k)|t|$.
\end{theorem}

\begin{example}
Let $\lambda=(8,5,2,1)$ be the Snow partition with parameters $k=4$, $n=14$, $j=16$, $t=4$ and $i=7$.
\[\begin{tikzpicture}[scale = 0.6]
\draw [black!50!white] (0,0) grid (10,-4);
\draw [fill=intrColor] (0,0) -- (5,0) -- (5,-1) -- (2,-1) -- (2,-2) -- (0,-2) -- cycle;
\draw [thick] (0,0) grid (8,-1);
\draw [thick] (0,-1) grid (5,-2);
\draw [thick] (0,-2) grid (2,-3);
\draw [thick] (0,-3) grid (1,-4);
\end{tikzpicture}\]
We have $\partial_\lambda(4)=(3^2,2,1)$, $\partial_{\lambda^\T}(4)=(2,1^7,0^2)$ and $4-\partial_{\lambda^\T}(4)=(2,3^7,4^2)$, hence $\gamma_\lambda(4)=(4^2,3^9,2^2,1)$. Then $d_\lambda(4)=\#\SSYT(\gamma_\lambda(4);14)=354900$.
\end{example}

\subsection{A hook-product formula}\label{sect:dimension-formula}

In this section we prove \Cref{thm:dimension-formula-intro}, naturally extending the result to all integers $t$. Recall the definitions of $\alpha_\lambda(t)$, $\beta_\lambda(t)$ and $d_\lambda(t)$ from \Cref{sec:BWB}.

In this section, a Snow partition $\lambda$ will always have fixed parameters $(k,n,j,t,i)$.

\begin{lemma}
    \label{lem: snow partition dimension}
For a Snow partition $\lambda$,
\[d_\lambda(t)=(-1)^i\prod_{1\leq r<s\leq n}\frac{\alpha_\lambda(t)_r-\alpha_\lambda(t)_s+s-r}{s-r}.\] 
\end{lemma}
\begin{proof}
Set $\alpha:=\alpha_\lambda(t)$ and $\beta:=\beta_\lambda(t)$. By the Weyl dimension formula,
\[d_\lambda(t)=\dim L_\beta^\vee=\dim L_\beta=\prod_{\phi\in\Phi^+}\frac{\langle \phi,\beta+\varrho'\rangle}{\langle \phi,\varrho'\rangle}.\]
Since the inner product is invariant under reordering of the elements,
\[\langle \phi,\beta+\varrho'\rangle=\langle \phi,\sigma(\alpha+\varrho')\rangle=\langle \sigma^{-1}(\phi),\alpha+\varrho'\rangle.\]
The permutation $\sigma^{-1}$ acts bijectively on the set $\{\{\phi,-\phi\}\mid \phi\in\Phi^+\}$ and swaps a positive root for a negative root for each inversion of $\sigma$. Therefore, since $\sigma$ (and hence $\sigma^{-1})$ has $i$ inversions by \Cref{lem:unique-entries} we obtain
\[\dim L_\beta=(-1)^i\prod_{\phi\in\Phi^+}\frac{\langle \phi,\alpha+\varrho'\rangle}{\langle \phi,\varrho'\rangle}=(-1)^i\prod_{1\leq r<s\leq n}\frac{\alpha_r-\alpha_s+s-r}{s-r}.\qedhere\]
\end{proof}

We evaluate the product in the above lemma by splitting it into three parts. Two of these parts will then be consolidated using the result below.

\begin{lemma}\label{lem:skew-howe-dimension}
For a $(k,n-k)$-bounded partition $\lambda$,
\[\dim L_\lambda\cdot \dim L_{\lambda^\T}=(-1)^{j-N}\prod_{a=1}^k\prod_{b=1}^{n-k}\frac{a+b-1}{h_{\lambda}(a,b)}.\]
\end{lemma}
\begin{proof}
By \cite[\S2.1]{Gre18},
\[\dim L_\lambda\cdot \dim L_{\lambda^\T}=\frac{\prod_{(a,b)\in (n-k)^k} h_{k^{(n-k)}}(a,b)}{\prod_{(a,b)\in\lambda}h_\lambda(a,b)\prod_{(a,b)\in\lambda^\C}h_{\lambda^\C}(a,b)}.\]
The hook lengths in $\lambda^\C$ are precisely those that appear as negative hook lengths in $\lambda$ when ranging over all $1\leq a\leq k$ and $1\leq b\leq n-k$, of which there are $N-j$. This gives the denominator and the sign of our formula. For the numerator, we have
\[\prod_{a=1}^k\prod_{b=1}^{n-k}\bigl((n-k)+k-a-b+1\bigr) = \prod_{a=1}^k\prod_{b=1}^{n-k}(a+b-1)\]
after reindexing $a$ for $k-a+1$ and $b$ for $n-k-b+1$.
\end{proof}

\begin{lemma}
    \label{lem: weyl product to hook product}
For a Snow partition $\lambda$,
\[\prod_{1\leq r<s\leq n}\frac{\alpha_\lambda(t)_r-\alpha_\lambda(t)_s+s-r}{s-r}=(-1)^j\prod_{a=1}^k\prod_{b=1}^{n-k}\frac{h_{\lambda}(a,b)-t}{h_{\lambda}(a,b)}.\]
\end{lemma}
\begin{proof}
We break the product on the right into three parts. If both $r$ and $s$ are at most $k$, we obtain
\[\prod_{1\leq r<s\leq k}\frac{(-\lambda_{k-r+1})-(-\lambda_{k-s+1})+s-r}{s-r}\]
which after reindexing $r$ for $k-s+1$ and $s$ for $k-r+1$ becomes
\[\prod_{1\leq r<s\leq k}\frac{\lambda_r-\lambda_s+s-r}{s-r}=\dim L_{\lambda}.\]
Similarly, if both $r$ and $s$ are greater than $k$ we get
\[\prod_{k<r<s\leq n}\frac{(\lambda_{r-k}^\T-t)-(\lambda_{s-k}^\T-t)+s-r}{s-r}\]
and reindexing $r$ for $r-k$ and $s$ for $s-k$ gives
\[\prod_{1\leq r<s\leq n-k}\frac{\lambda_r^\T-\lambda_s^\T+s-r}{s-r}=\dim L_{\lambda^\T}.\]
Finally, we consider the mixed terms where $r\leq k$ and $s>k$. Reindexing $a$ for $k-r+1$ and $b$ for $s-k$ gives $s-r=a+b-1$ and hence
\[\prod_{a=1}^k\prod_{b=1}^{n-k}\frac{-\lambda_a-(\lambda_b^\T-t)+a+b-1}{a+b-1}=(-1)^N\prod_{a=1}^k\prod_{b=1}^{n-k}\frac{h_\lambda(a,b)-t}{a+b-1}.\]
Multiplying the three terms together and directly applying \Cref{lem:skew-howe-dimension} gives the required result.
\end{proof}
We can now state and immediately prove the main theorem for this section, which extends \Cref{thm:dimension-formula-intro} to all integers $t$.

\begin{definition}\label{def:t-hook-ratio}
Suppose $\lambda$ is a $(k,n-k)$-bounded partition and $t$ is an integer. We define the $t$-\emph{hook ratio} of $\lambda$ to be
\[f_\lambda(t):=\prod_{a=1}^k\prod_{b=1}^{n-k}\frac{h_\lambda(a,b)-t}{h_\lambda(a,b)},\]
where $h_\lambda(a,b):=\lambda_a+\lambda_b^\T-a-b+1$.
\end{definition}
\begin{theorem}\label{thm:dimension-formula}
For a Snow partition $\lambda$, $d_\lambda(t)=(-1)^{i+j}f_\lambda(t)$. Hence,
\[h^{j,i}(t)=(-1)^{i+j}\sum_\lambda f_\lambda(t)\]
where the sum is over all Snow partitions with parameters $(k,n,j,t,i)$.
\end{theorem}
\begin{proof}
    This follows from \Cref{thm:cohomology-iso} and \Cref{lem: snow partition dimension,lem: weyl product to hook product}.
\end{proof}
We conclude this section with some comments on the $t$-hook ratio. For a bounded partition $\lambda$, $f_\lambda(t)$ is a rational polynomial whose constant term is $1$, and whose roots are the (positive and negative) hook lengths of $\lambda$, counted with multiplicity. When $\lambda=\lambda^\C$, then $f_\lambda(t)$ can be written
\[\prod_{(a,b)\in\lambda}\left(1-\frac{t^2}{h_\lambda(a,b)^2}\right)\]
which appears in \cite[Theorem 1.1]{Han08} and is closely related to the Nekrasov-Okounkov hook length formula \cite[Theorem 6.12]{Nek06}.

\begin{remark}\label{rmk:integer-valued}
Extend the definition of $d_\lambda(t)$ to be zero when $\lambda$ is not $t$-core, in which case $f_\lambda(t)$ is also zero. Then $d_\lambda(t)=|f_\lambda(t)|$ for every bounded partition $\lambda$ and integer $t$. In particular, the $t$-hook ratio is an integer-valued polynomial in $t$. However, $d_\lambda(t)$ and $h^{j,i}(t)$ are not polynomials in general, since the sign $(-1)^{i+j}$ (and the $i$ term in particular) depends on both $\lambda$ and $t$.
\end{remark}

\subsection{Examples}\label{sec:examples}

To compute the Hodge numbers $h^{j,i}(t)$ by the above methods, we first have to classify all Snow partitions $\lambda$ with parameters $(k,n,j,t,i)$. We then have three options for computing $d_\lambda(t)$: using the Weyl dimension formula on $\beta_\lambda(t)$ directly requires a product of $n(n-1)/2$ ratio terms, calculating $\#\SSYT(\gamma_\lambda(t);n)$ by \Cref{eq:SSYT} requires $(n-k)|t|$ terms, and evaluating $f_\lambda(t)$ requires $N=k(n-k)$ terms. The latter calculation can sometimes be dramatically simplified via a simple observation.

\begin{lemma}\label{lem:h+h'}
Suppose $h$ and $h'$ satisfy $h+h'=t$. Then
\[\frac{h-t}{h}=\left(\frac{h'-t}{h'}\right)^{-1},\]
so the ratios corresponding to boxes in $\lambda$ with hook lengths $h$ and $h'$ cancel in the evaluation of $f_\lambda(t)$.
\end{lemma}

\begin{example}[Duality]
Since $\lambda$ and $\lambda^\T$ have the same hook lengths, the $t$-hook ratios $f_\lambda(t)$ and $f_{\lambda^\T}(t)$ are equal. This arises from the duality between $\Gr(k,n)$ and $\Gr(n-k,n)$. The hook lengths of $\lambda^\C$ are the negatives of the hook lengths of $\lambda$, which gives the equality $f_\lambda(-t)=f_{\lambda^\C}(t)$. This gives $h^{j,i}(t)=h^{N-j,N-i}(-t)$, which is implied by Serre duality.

From these equalities, when calculating $h^{j,i}(t)$ we can assume that $t\geq 0$ and $k\leq n/2$ without loss of generality.
\end{example}

\begin{example}[$j=0$]
The only Snow partition is $\lambda=(0^k)$. For $t\geq 0$, we have $i=0$ and $h_\lambda(a,b)=-a-b+1$ for every box $(a,b)$, so
\[h^{0,0}(t)=d_\lambda(t)=\prod_{a=1}^k\prod_{b=1}^{n-k}\frac{-a-b+1-t}{-a-b+1}=\prod_{a=1}^k\prod_{b=1}^{n-k}\frac{a+b+t-1}{a+b-1}\]
which is the number of $(k,n-k,t)$-bounded plane partitions \cite{Mac86}.

Alternatively, for $t\geq 0$ we have $\gamma_\lambda(t)=(t^{n-k},0^k)$, so $f_\lambda(t)=\#\SSYT((t^{n-k});n)$. Subtracting $\kappa$ from every label in row $\kappa$ of the tableau for every $1\leq \kappa\leq n-k$ gives a bijection to plane partitions which fit inside a $(n-k)\times t\times k$ box, since entries now weakly decrease along rows and down columns.

\label{ex: Macmahon}
\end{example}

\begin{example}[$t=1$]
The only $1$-core partition is again $\lambda=(0^k)$, so
\begin{align*}h^{0,0}(1)&=\#\{\text{plane partitions in a $k\times(n-k)\times 1$ box}\}\\
&=\#\{\text{bounded partitions}\}=\binom{n}{k},\end{align*}
and $h^{j,i}(1)=0$ otherwise.
\end{example}

\begin{example}[$t=0$]
    \label{ex:t=0}
Every bounded partition $\lambda$ is $0$-core with $i=j$. The BWB weight is $\beta_\lambda(0)=(0^n)$ and hence $d_\lambda(0)=1$. Equivalently, $f_\lambda(0)=1$ since every term in the product is $1$. Therefore,
\[h^{j,j}(0)=\#\{\text{bounded partitions of size $j$}\},\]
which is the coefficient of $q^j$ in the Gaussian binomial coefficient
\[\frac{(1-q^n)\cdots(1-q)}{(1-q^k)\cdots (1-q)(1-q^{n-k})\cdots(1-q)}\in\Z[q].\]
Otherwise, $h^{j,i}(0)=0$ for $i\neq j$.
\end{example}

\begin{example}[$t=2$]
    \label{ex:t=2}
Assume without loss of generality that $k\leq n/2$. The bounded $2$-core partitions are the staircase partitions $\Delta_m:=(m,\dots,2,1)$ for every $0\leq m\leq k$. For a fixed such $m$ set $\beta:=\beta_{\Delta_m}(2)$. Since $\iota_{\Delta_m}(2)=\Delta_{m-1}$, we have
\[\gamma_{\Delta_m}(2)=(2^{n-k-m},1^{2m},0^{k-m}).\]
Applying \Cref{eq:SSYT} to this shape gives
\[d_{\Delta_m}(2)=\frac{n\cdots(n-k-m+1)(n+1)\cdots(n-k+m+2)}{(2m)!(k-m)!(k+m+1)\cdots(2m+2)}\]
which can be simplified to
\[\frac{2m+1}{n+1}\binom{n+1}{k+m+1}\binom{n+1}{n-k+m+1},\]
recovering \cite[Theorem 3.3]{Sno86}. In total, we obtain
\[h^{m(m+1)/2,m(m-1)/2}(2)=\frac{2m+1}{n+1}\binom{n+1}{k+m+1}\binom{n+1}{n-k+m+1}\]
for every $0\leq m\leq \min(k,n-k)$, and $h^{j,i}(2)=0$ otherwise.
\end{example}

\section{Restrictions on bounded core partitions}

We have seen that positivity of the twisted Hodge number $h^{j,i}(t)$ is equivalent to the existence of a Snow partition $\lambda$. In this section we prove necessary conditions for the existence of Snow partitions with given parameters. When these restrictions are not satisfied, the corresponding cohomology will vanish.

\subsection{Skew-linking diagrams}\label{sec:plane-partitions}

We use a construction of Chen \cite{Chen10} to define a map $\lambda\mapsto\mathcal{P}_\lambda(t)$ from Snow partitions to plane partitions (which were defined in \Cref{sec:tableaux}). Many of our restrictions on Snow partitions then derive from simple arguments about plane partitions.

Recall that for a Snow partition $\lambda$ with $t>0$, the skew partition $\lambda/\iota_\lambda(t)$ has weakly decreasing row and column lengths by \Cref{lem:bounded-bijection}. Skew partitions with this property are called \emph{skew-linking diagrams}. Note that there are skew-linking diagrams which are not of the form $\lambda / \iota_\lambda(t)$.

We build a plane partition $P$ out of an arbitrary skew-linking diagram $\lambda/\mu$ with an iterative procedure. In the first step, read down the rows of $\lambda/\mu$ and select any row which does not overlap in the columns of an already-selected row. We highlight these rows in the below skew-linking diagram, which were selected top-to-bottom:

\[\begin{tikzpicture}[scale=0.4]
\fill [shade1] (4,0) rectangle (9,-1);
\fill [shade1] (1,-2) rectangle (4,-3);
\fill [shade1] (0,-5) rectangle (1,-6);

\draw [thick] (4,0) grid (9,-1);
\draw [thick] (1,-2) grid (4,-3);
\draw [thick] (0,-5) grid (1,-6);
\draw [thick] (2,-1) grid (6,-2);
\draw [thick] (0,-4) grid (2,-5);
\draw [thick] (1,-3) grid (3,-4);
\draw [thick] (0,-6) grid (1,-7);
\end{tikzpicture}\]

The labels of the first row of $P$ are determined by `stacking' these rows. More precisely, $P_{1,s}$ for $s\geq 1$ is the number of rows selected in this step with length at least $s$. We then label the second row of $P$ by the same procedure, but ignore any rows in $\lambda/\mu$ which have already been selected. Below we shade the selected rows in the second step.

\[\begin{tikzpicture}[scale=0.4]
\fill [shade1] (4,0) rectangle (9,-1);
\fill [shade1] (1,-2) rectangle (4,-3);
\fill [shade1] (0,-5) rectangle (1,-6);
\fill [shade2] (2,-1) rectangle (6,-2);
\fill [shade2] (0,-4) rectangle (2,-5);

\draw [thick] (4,0) grid (9,-1);
\draw [thick] (1,-2) grid (4,-3);
\draw [thick] (0,-5) grid (1,-6);
\draw [thick] (2,-1) grid (6,-2);
\draw [thick] (0,-4) grid (2,-5);
\draw [thick] (1,-3) grid (3,-4);
\draw [thick] (0,-6) grid (1,-7);
\end{tikzpicture}\]

We continue until all rows have been exhausted, at which point we obtain $P$.

\[\begin{tikzpicture}[scale = 0.4]
\fill [shade1] (4,0) rectangle (9,-1);
\fill [shade1] (1,-2) rectangle (4,-3);
\fill [shade1] (0,-5) rectangle (1,-6);
\fill [shade2] (2,-1) rectangle (6,-2);
\fill [shade2] (0,-4) rectangle (2,-5);
\fill [shade3] (1,-3) rectangle (3,-4);
\fill [shade3] (0,-6) rectangle (1,-7);

\draw [thick] (4,0) grid (9,-1);
\draw [thick] (1,-2) grid (4,-3);
\draw [thick] (0,-5) grid (1,-6);
\draw [thick] (2,-1) grid (6,-2);
\draw [thick] (0,-4) grid (2,-5);
\draw [thick] (1,-3) grid (3,-4);
\draw [thick] (0,-6) grid (1,-7);

\fill [shade1] (12,0) rectangle (23,-2);
\fill [shade2] (12,-2) rectangle (23,-4);
\fill [shade3] (12,-4) rectangle (23,-6);
\node at (13,-1) {\Large 3};
\node at (15,-1) {\Large 2};
\node at (17,-1) {\Large 2};
\node at (19,-1) {\Large 1};
\node at (21,-1) {\Large 1};
\node at (13,-3) {\Large 2};
\node at (15,-3) {\Large 2};
\node at (17,-3) {\Large 1};
\node at (19,-3) {\Large 1};
\node at (13,-5) {\Large 2};
\node at (15,-5) {\Large 1};

\foreach \i in {0,-2,-4,-6} {
	\draw (12,\i) -- (23,\i);
}
\foreach \i in {12,14,16,18,20,22} {
	\draw (\i,0) -- (\i,-7);
}
\end{tikzpicture}\]

We could create another plane partition $P'$ by swapping the roles of rows and columns in the construction, and labelling $P'$ one column at a time by selecting columns in $\lambda/\mu$ which do not overlap in their rows. Surprisingly, it turns out that $P=P'$ \cite[Corollary 3.4.1]{Chen10}. Moreover, we obtain the following properties:

\begin{theorem}
Let $\lambda/\mu$ be a skew-linking diagram, and let $P$ be the plane partition obtained from it by the above construction. Then:
\begin{itemize}
\item The length of the first row/column of $P$ is the length of the first row/column of $\lambda/\mu$.
\item The sum of the first row/column of $P$ is the number of columns/rows in $\lambda/\mu$.
\item The sum $|P|$ of labels in $P$ is equal to the number of boxes in $\lambda/\mu$.
\item Let $\Delta P$ be the plane partition obtained by replacing every label $\ell$ in $P$ by $1+2+\cdots+(\ell-1)$. Then $|\Delta P|=|\mu|$.
\end{itemize}
\end{theorem}
\begin{proof}
The first (and longest) row of $\lambda/\mu$ is always the first row selected in the construction, which adds this many $1$s to the first row of $P$. The statement for columns follows from $P=P'$. The first box of every row in $\lambda/\mu$ increments a label in the first column of $P$. Similarly for columns and rows by $P=P'$.

Every box of $\lambda/\mu$ increments a label in $P$, which gives the third statement. For the fourth, see \cite[Lemma 2.3.6]{Chen10}.
\end{proof}

\begin{definition}
For a Snow partition $\lambda$ with $t>0$, let $\mathcal{P}_\lambda(t)$ denote the plane partition obtained from $\lambda/\iota_\lambda(t)$ using the above construction.
\end{definition}

\begin{corollary}\label{cor:plane-partition}
For a Snow partition $\lambda$ with parameters $(k,n,j,t,i)$ and $t>0$, let $P:=\mathcal{P}_\lambda(t)$. Then:
\begin{itemize}
\item $P$ is $(t-1,t-1,k)$-bounded.
\item The sum of the first column of $P$ is at most $k$.
\item The sum of the first row in $P$ is at most $n-k$.
\item If $\Delta P$ is defined as above, then $i=|\Delta P|$ and $j=|P|+|\Delta P|$.
\end{itemize}
\end{corollary}
\begin{proof}
Recall that $\lambda/\iota(t)$ is a skew-linking diagram whose first row and first column have length at most $t-1$ by \Cref{lem:bounded-bijection}. Also, the number of rows and columns in $\lambda$ are at most $k$ and $n-k$. Finally, we have $j=i+(j-i)=|\Delta P|+|P|$.
\end{proof}

\subsection{Nakano vanishing}
\label{sec: nakano}

The Nakano vanishing theorem (see \cite{Nak54}), sometimes called the Kodaira--Nakano or Akizuki--Nakano vanishing theorem, asserts that for a compact K\"ahler manifold $X$ and an ample line bundle $L$, 
\[H^i(X,\Omega_X^j\otimes L)=0\quad \text{ whenever }\quad i+j>\dim X.\]
In the case $X=\mathrm{Gr}(k,n)$, this gives $H^i(X,\Omega^j(t))=0$ whenever $t>0$ and $i+j>N$, where $N:=k(n-k)$. We prove this inequality for all plane partitions, where the values $k,n,j,i$ are associated to a plane partition based on \Cref{cor:plane-partition}. We then obtain the same inequality for Snow partitions using the map $\lambda\mapsto\mathcal{P}_\lambda(t)$ from the previous section.

\begin{lemma}\label{lem:nakano}
For a plane partition $P$, choose $k,n,j,i$ such that
\begin{itemize}
\item $k$ is the sum of the first column of $P$,
\item $n-k$ is the sum of the first row of $P$,
\item $i=|\Delta P|$ and $j=|\Delta P|+|P|$.
\end{itemize}
Then $i+j\leq k(n-k)$.
\end{lemma}
\begin{proof}
We have
\[k(n-k)=\sum_{r}P_{r,1}\sum_s P_{1,s}\]
by definition. Since the entries in a plane partition are weakly decreasing along rows and down columns, we get
\[\sum_r P_{r,1}\sum_s P_{1,s}\geq \sum_{r,s}P_{r,s}^2\]
and by the identity $\ell^2=2(1+2+\cdots+(\ell-1))+\ell$, the right-hand side of this inequality is equal to $2|\Delta P|+|P|=i+j$.
\end{proof}
\begin{remark}\label{rmk:tightly-bounded}
In general, the values of $k$ and $n-k$ for the Snow partition $\lambda$ might be greater than the associated values for $\mathcal{P}_\lambda(t)$, with equality when $\lambda$ is \emph{tightly} bounded in its $k\times(n-k)$ rectangle, or equivalently, when $\lambda$ and $\lambda^\T$ have no zero entries. The proof using plane partitions is sufficient because increasing $k$ and $n-k$ preserves the inequality $i+j\leq k(n-k)$.
\end{remark}

In the next theorem, we calculate the Hodge numbers when $i+j=N$ and observe that the Snow partitions which satisfy this condition are ``generalized staircases'' (see \Cref{fig:i+j=N}).

\begin{figure}[H]
\begin{center}
\begin{tikzpicture}[scale = 0.4]
\node at (4,-5) {$t=3$};
\draw [black!50!white] (0,0) grid (8,-4);
\fill [intrColor] (0,0) -- (6,0) -- (6,-1) -- (4,-1) -- (4,-2) -- (2,-2) -- (2,-3) --  (0,-3) -- cycle;
\draw [thick] (0,0) grid (2,-4);
\draw [thick] (2,0) grid (4,-3);
\draw [thick] (4,0) grid (6,-2);
\draw [thick] (6,0) grid (8,-1);
\end{tikzpicture}
\quad
\begin{tikzpicture}[scale = 0.4]
\node at (4,-5) {$t=6$};
\draw [black!50!white] (0,0) grid (8,-4);
\fill [intrColor] (0,0) -- (4,0) -- (4,-2) -- (0,-2) -- cycle;
\draw [thick] (0,0) grid (4,-4);
\draw [thick] (4,0) grid (8,-2);
\end{tikzpicture}\quad
\begin{tikzpicture}[scale = 0.4]
\node at (4,-5) {$t\geq 12$};
\fill [intrColor] (0,0) -- (8,0) -- (8,-4) -- (0,-4) -- cycle;
\draw [thick] (0,0) grid (8,-4);
\end{tikzpicture}
\end{center}
\caption{All Snow partitions for $t>0$ with $k=8$, $n=12$ and $i+j=N$. The $t$-interior of each is shaded with the possible values of $t$ given.}
\label{fig:i+j=N}
\end{figure}

\begin{theorem}\label{thm:i+j=N}
Suppose $i,t>0$ and $i+j=N$. Then $h^{j,i}(t)\neq 0$ if and only if $t\mid n\mid kt$ and $i=N(n-t)/(2n)$. In this case, $h^{j,i}(t)=1$.
\end{theorem}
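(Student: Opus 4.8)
The plan is to combine Theorem~\ref{theorem:BWB} with the preceding Lemma, so that the cohomological statement becomes a purely arithmetic translation. By Theorem~\ref{theorem:BWB}, $H^i(X,\Omega^j(t))\neq 0$ if and only if a knijt-partition with parameters $(k,n,i,j,t)$ exists, and under the hypothesis $i+j=N$ the Lemma describes all such partitions explicitly. Hence the entire task reduces to re-expressing the Lemma's conditions on $(a,b)$ in terms of divisibility conditions on $(k,n,t)$ and then computing the resulting value of $i$.

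First I would dispose of the case $i=0$: here the Lemma forces $\lambda=(k^{n-k})$ with $t\geq n$, which is exactly the second branch of the statement. For $i>0$ I would analyze the staircase $\lambda=(k^{b},\dots,(2a)^{b},a^{b})$. The crucial observation is that this notation records a genuine staircase: the parts run $a,2a,\dots,k$ in steps of $a$, so $k=\ell a$ for the positive integer $\ell:=k/a$, which equals the number of diagonal blocks. Combined with the Lemma's relation $bk=a(n-k)$ this yields $n-k=\ell b$, hence $\ell=k/a=(n-k)/b$ and $t=a+b=(k+(n-k))/\ell=n/\ell$.

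From $t=n/\ell$ I would read off the divisibility conditions. Since $\ell$ is a positive integer, $t\mid n$; and since $a=kt/n$ must be a positive integer, $n\mid kt$. Conversely, if $t\mid n\mid kt$ then $\ell:=n/t$, $a:=kt/n$ and $b:=(n-k)t/n=t-a$ are positive integers (positivity follows because $n\mid kt$ with $kt\geq 1$ forces $kt\geq n$, and likewise $(n-k)t\geq n$), and they assemble into a valid staircase with $t=a+b$. This establishes the equivalence with $t\mid n\mid kt$. Finally I would compute $i$: the $t$-interior is the union of the $\binom{\ell}{2}$ internal blocks, each of size $ab=(kt/n)\cdot((n-k)t/n)=Nt^{2}/n^{2}$, so using $\ell(\ell-1)=(n/t)(n/t-1)=n(n-t)/t^{2}$ we obtain
\[
i=\binom{\ell}{2}ab=\frac{1}{2}\cdot\frac{n(n-t)}{t^{2}}\cdot\frac{Nt^{2}}{n^{2}}=\frac{N(n-t)}{2n}.
\]
A parallel count of the $\binom{\ell}{2}$ external blocks together with the $\ell$ diagonal blocks gives $j$ and confirms $i+j=\ell^{2}ab=N$.

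I expect the main obstacle to be exactly the integrality subtlety in the second paragraph. The Lemma's explicit hypothesis is only $bk=a(n-k)$, which by itself does \emph{not} force $a\mid k$ or $b\mid(n-k)$, and it is tempting to derive merely $n\mid kt$ while missing the condition $t\mid n$. The point to argue carefully is that the staircase shape---the parts increasing by exactly $a$ up to the value $k$---encodes $a\mid k$, and that this is precisely what produces the integer $\ell=n/t$ and hence $t\mid n$. Once this is pinned down, both directions of the equivalence and the closed form for $i$ are routine.
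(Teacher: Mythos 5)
Your proposal is correct and follows essentially the same route as the paper: reduce via Theorem~\ref{theorem:BWB} and the preceding Lemma to the uniform-block staircase, read off $\ell=n/t$ with $\ell\mid k$ to get $t\mid n\mid kt$, and compute $i$ as the total size of the $\binom{\ell}{2}$ internal blocks. Your explicit justification of the integrality of $\ell=k/a$ from the staircase shape makes precise a step the paper states without elaboration, but it is the same argument.
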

\begin{proof}
To get equality in the proof of \Cref{lem:nakano}, we need $\lambda$ to be tightly bounded (\Cref{rmk:tightly-bounded}), and for $P_{r,s}^2=P_{r,1}P_{1,s}$ for all $r$ and $s$, which means $P$ takes a constant value $\ell$ on some $a\times b$ rectangle with no other labels. The skew-linking diagram which maps to this partition consists of $\ell$ diagonally arranged blocks of size $a\times b$. We have $k=\ell a$ and $n-k=\ell b$, so $n=\ell(a+b)$.

The largest hook length on each diagonal block is $a+b-1$, and the smallest hook lengths above the diagonal blocks are $a+b+1$, so $t=a+b$ and $n=\ell t$. Together we get $t\mid n\mid kt$. The size of the interior is
\[i=\frac{ab\ell(\ell-1)}{2}=\frac{N(\ell-1)}{2\ell}=\frac{N(n-t)}{2n}.\]
Conversely, if $t\mid n\mid kt$ we set $\ell=n/t$, $a=k/\ell$, $b=(n-k)/\ell$, which are integers. We let $\lambda$ be the generalized staircase with $\ell$ diagonal blocks of size $a\times b$.

We can prove that $d_\lambda(t)=1$ either using the $t$-hook ratio and the cancelling of boxes in $\iota_\lambda(t)$ and $\lambda^\C$ using \Cref{lem:h+h'}, or by observing that $\gamma_\lambda(t)=(b^n)$ and $\#\SSYT((b^n);n)=1$ since every column must contain all values $1,\dots,n$.
\end{proof}

\begin{example}
For $k=2$ and $n=4$ (so $N=4$), the only Snow partition with $i+j=4$ is $\lambda=(2,1)$ with $j=3$ and $i=1$. Below is the Hodge diamond for $\mathrm{Gr}(2,4)$ twisted by $\mathcal{O}(2)$, showcasing $h^{3,1}(2)=1$.
\[\begin{tikzpicture}[scale=.4, every node/.style={inner sep=1pt}]

    \node at (0,4) {0};

    \node at (-1,3) {0};
    \node at (1,3) {0};

    \node at (-2,2) {0};
    \node at (0,2) {0};
    \node at (2,2) {0};

    \node at (-3,1) {0};
    \node at (-1,1) {0};
    \node at (1,1) {0};
    \node at (3,1) {0};

    \node at (-4,0) {0};
    \node at (-2,0) {1};
    \node at (0,0) {0};
    \node at (2,0) {0};
    \node at (4,0) {0};

    \node at (-3,-1) {0};
    \node at (-1,-1) {0};
    \node at (1,-1) {0};
    \node at (3,-1) {0};

    \node at (-2,-2) {0};
    \node at (0,-2) {0};
    \node at (2,-2) {0};

    \node at (-1,-3) {15};
    \node at (1,-3) {0};

    \node at (0,-4) {20};

    \draw[thick] (-4.6,-0.6) rectangle (4.6,0.6);
    \node[font=\small] at (4.8,0) [anchor=west] {$\,\,i+j=4$};
\end{tikzpicture}\]
\end{example}

The case $i+j=N-1$ turns out to be far more restrictive. In this case we get at most two Snow partitions satisfying $i,t>0$ and $i+j=N-1$ for any fixed value of $n$ (see \Cref{fig:i+j=N-1}).

\begin{figure}[H]
\begin{center}\begin{tikzpicture}[scale = 0.6]
\draw [black!50!white] (0,0) grid (9,-2);
\fill [intrColor] (0,0) -- (4,0) -- (4,-1) -- (0,-1) -- cycle;
\draw [thick] (0,0) grid (4,-2);
\draw [thick] (4,0) grid (9,-1);
\end{tikzpicture}\end{center}
\caption{A Snow partition $\lambda$ satisfying $i+j=N-1$ for $n=11$ and $i>0$. The only other such Snow partition is $\lambda^\T$. Both require that $i=4$ and $t=6$.}
\label{fig:i+j=N-1}
\end{figure}

\begin{theorem}\label{thm:i+j=N-1}
Suppose $i,t>0$ and $i+j=N-1$. Then $h^{j,i}(t)=n$ if $n\geq 5$, $k\in\{2,n-2\}$, $i=(n-3)/2$ and $t=(n+1)/2$. Otherwise, $h^{j,i}(t)=0$.
\end{theorem}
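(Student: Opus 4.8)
The plan is to mirror the proof of the preceding lemma (the classification of the case $i+j=N$), tracking the single unit of slack that the hypothesis $i+j=N-1$ introduces into the Nakano bound. By replacing $\lambda$ with $\lambda^\mathrm{T}$ if necessary I may assume $k\le n/2$; the case $k=1$ is discarded immediately, since the only $1$-bounded $t$-cores are single columns, whose interiors are empty, forcing $i=0$. So assume $2\le k\le n/2$. Recall from Theorem~\ref{thm:nakano} the coarsening of $\lambda$ into blocks $B_{p,q}$ and the injection $\tau$ from the boxes of $\varepsilon_\lambda(t)$ into the boxes of $\lambda^\mathrm{C}$, together with the identity $|\lambda^\mathrm{C}|-|\varepsilon_\lambda(t)|=N-i-j$. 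Thus $i+j=N-1$ is equivalent to saying that $\tau$ misses exactly one box, i.e.\ the total slack equals $1$.

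I would decompose this deficit into its nonnegative integer sources, exactly as in the lemma: (a) the looseness $N-\lambda_1\lambda_1^\mathrm{T}$ of the bounding rectangle; (b) the slack $|\lambda^\mathrm{C}_{\mathrm{tight}}|-|\varepsilon_\lambda(t)|$ computed inside the tight rectangle, which further splits into the per-block deficits $|\tau(B'_{p,q})|-|B'_{p,q}|$ from the size inequalities (\ref{eq:r-ineq}) and its transpose, and the ``gap'' terms $s_{\ell-\alpha_p+2}+\cdots$ and $r_{\ell-\beta_q+2}+\cdots$ in (\ref{eq:r-ineq}) that appear whenever a block of $\varepsilon_\lambda(t)$ fuses several blocks of $\lambda$. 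Since the total slack is at least source (a), and a loose rectangle contributes at least $\min(k,n-k)=k\ge 2$ to it (a slack of $1$ being impossible because $N-\lambda_1\lambda_1^\mathrm{T}\ge n-k$ if $\lambda_1<k$ and $\ge k$ if $\lambda_1^\mathrm{T}<n-k$), the rectangle must be tight. It remains to locate the unit of slack among the sources in (b).

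The core of the argument, and the step I expect to be the main obstacle, is to show that the fusion (gap) terms cannot carry the slack, so that it lies entirely in a single per-block deficit. Here I would argue that any gap forces a strict inequality in (\ref{eq:r-ineq}) and hence inflates the deficit by at least the width or height of a fused block, and moreover that such a defect \emph{propagates}: a single strict step $r_m<r_{m+1}$ (resp.\ $s_m<s_{m+1}$) increases the deficit of \emph{every} internal block straddling that step, and when $\ell\ge 3$ at least two internal blocks straddle it, giving slack at least $2$. Disentangling this propagation — confirming that exactly one internal block can straddle the step only when $\ell=2$, and that each fusion or omitted external block likewise costs at least $2$ — is where the hypotheses $i>0$ and $k\le n/2$ must be used carefully. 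The conclusion is that the rectangle is tight, $\varepsilon_\lambda(t)$ is the union of all internal blocks with $\ell'=\ell-1$, and exactly one step among the $r_p\le r_{p+1}$, $s_q\le s_{q+1}$ is strict, by exactly $1$, with $\ell=2$.

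With $\ell=2$ the unique internal block $B_{1,1}$ has deficit $r_2s_2-r_1s_1$; setting this equal to $1$ forces $s_1=s_2=1$ and $r_2=r_1+1$, whence $k=\lambda_1=s_1+s_2=2$ (and the transpose case yields $k=n-2$). Then $\lambda=(2^{r_1},1^{r_2})$ with $r_1+r_2=\lambda_1^\mathrm{T}=n-2$ and $r_2=r_1+1$, so $n$ is odd, $r_1=(n-3)/2$, and $i=|\varepsilon_\lambda(t)|=r_1=(n-3)/2$; since $i>0$ this gives $n\ge 5$. The value of $t$ is then forced because $t$ must separate the largest diagonal hook $(n-1)/2$ from the smallest interior hook $(n+3)/2$, leaving $t=(n+1)/2$. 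For the converse I would simply compute the hook lengths of $\lambda=(2^{(n-3)/2},1^{(n-1)/2})$ directly: they form exactly the set $\{1,\dots,(n-1)/2\}\cup\{(n+3)/2,\dots,n-1\}$, so $\lambda$ is $(n+1)/2$-core with a $t$-interior of size $(n-3)/2$ and total size $j=(3n-7)/2=N-1-i$, and Theorem~\ref{theorem:BWB} gives $H^i(X,\Omega^j(t))\neq 0$.
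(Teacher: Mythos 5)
Your proposal is correct and follows essentially the same route as the paper's proof: both run the injection $\tau$ from the proof of Theorem~\ref{thm:nakano}, show the single unit of slack forces a tight rectangle, rule out missed external blocks and fusion of blocks (each of which costs at least $2$), and localize the deficit to one internal block with $\ell=2$, yielding $\lambda=(2^{r},1^{r+1})$ up to transpose. The step you flag as the main obstacle does close as you sketch — a strict step $r_m<r_{m+1}$ inflates the deficit of all $m(\ell-m)\geq 2$ internal blocks straddling it unless $\ell=2$, which is the same counting the paper performs — so there is no gap.
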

\begin{proof}
Fix $1\leq k<n$ and $t>0$. We first prove that there are at most two Snow partitions when $i+j=N-1$ and $i>0$.

Consider the chain of inequalities in \Cref{lem:nakano}. If $i>0$, we have both $k>1$ and $n-k>1$. Hence, if $\lambda$ or $\lambda^\T$ had zero entries, we would have $k>\sum_r P_{r,1}$ or $n-k>\sum_s P_{1,s}$, and $k(n-k)\geq i+j+2$ in either case.

Hence, we have $\sum_{r,s}P_{r,1}P_{1,s}=\sum_{r,s}P_{r,s}^2+1$, so
\[P=\vcenter{\hbox{\begin{tikzpicture}[scale=0.6]
\draw (0,0) grid (6,-1);
\node at (0.5,-0.5) {2};
\node at (1.5,-0.5) {2};
\node at (2.5,-0.5) {2};
\node at (3.5,-0.5) {$\,\cdots$};
\node at (4.5,-0.5) {2};
\node at (5.5,-0.5) {1};
\end{tikzpicture}}}\]
or its transpose. If there are $r$ copies of $2$ in this plane partition, then $\lambda=(2r+1,r)$ (or its transpose) which is a Snow partition for $n=2r+3$, $i=r$ and $t=r+2$, satisfying the conditions in the theorem. Conversely, given parameters matching the conditions in the theorem we can set $r=t-2$ and $\lambda=(2r+1,r)$ when $k=2$, or its transpose when $k=n-2$, which both satisfy $i+j=N-1$.

We can calculate that $d_\lambda(t)=n$ for $\lambda=(2r+1,r)$ either using the $t$-hook ratio, or by observing that $\gamma_\lambda(t)=((r+1)^{n-1},r)$, and $\#\SSYT(\gamma_\lambda(t);n)=n$ since each tableau is determined by the entry missing from the final column.
\end{proof}

\begin{example}
For $k=2$ and $n=5$ (so $N=6$), the only Snow partition satisfying $i,t>0$ and $i+j=6-1$ is $\lambda=(3,1)$ with $j=4$ and $i=1$. Below is the Hodge diamond for $\mathrm{Gr}(2,5)$ twisted by $\mathcal{O}(3)$, showcasing $h^{4,1}(3)=5$.
\[\begin{tikzpicture}[scale=.4, every node/.style={inner sep=1pt}]

        \node at (0,6) {0};

        \node at (-1,5) {0};
        \node at (1,5) {0};

        \node at (-2,4) {0};
        \node at (0,4) {0};
        \node at (2,4) {0};

        \node at (-3,3) {0};
        \node at (-1,3) {0};
        \node at (1,3) {0};
        \node at (3,3) {0};

        \node at (-4,2) {0};
        \node at (-2,2) {0};
        \node at (0,2) {0};
        \node at (2,2) {0};
        \node at (4,2) {0};

        \node at (-5,1) {0};
        \node at (-3,1) {0};
        \node at (-1,1) {0};
        \node at (1,1) {0};
        \node at (3,1) {0};
        \node at (5,1) {0};

        \node at (-6,0) {0};
        \node at (-4,0) {0};
        \node at (-2,0) {0};
        \node at (0,0) {0};
        \node at (2,0) {0};
        \node at (4,0) {0};
        \node at (6,0) {0};

        \node at (-5,-1) {0};
        \node at (-3,-1) {5};
        \node at (-1,-1) {0};
        \node at (1,-1) {0};
        \node at (3,-1) {0};
        \node at (5,-1) {0};

        \node at (-4,-2) {0};
        \node at (-2,-2) {0};
        \node at (0,-2) {0};
        \node at (2,-2) {0};
        \node at (4,-2) {0};

        \node at (-3,-3) {0};
        \node at (-1,-3) {0};
        \node at (1,-3) {0};
        \node at (3,-3) {0};

        \node at (-2,-4) {120};
        \node at (0,-4) {0};
        \node at (2,-4) {0};

        \node at (-1,-5) {280};
        \node at (1,-5) {0};

        \node at (0,-6) {175};

        \draw[thick] (-5.6,-1.6) rectangle (5.6,-0.4);
        \node[font=\small] at (5.8,-1) [anchor=west] {$\,\,i+j=5$};

        \end{tikzpicture}\]
\end{example}

\subsection{Snow's bounds}\label{sect:bounds}

In \cite[\S3.4]{Sno86}, Snow lists five inequalities in terms of the parameters $(k,n,j,t,i)$ which guarantee the nonexistence of Snow partitions. In this section we strictly improve four of these bounds by turning them into necessary inequalities for the existence of Snow partitions. The bounds we provide are sharp in the sense that there are infinitely many Snow partitions satisfying the associated equality. Our proofs are given on the level of plane partitions where possible, using the construction in \Cref{sec:plane-partitions}.

Recall that by applying the symmetries $\lambda\mapsto\lambda^\T$ and $\lambda\mapsto\lambda^\C$ we can effectively assume that $t>0$ and $k\leq n/2$. Under these assumptions, Snow proves that no Snow partitions exist if any of the following are satisfied:

\begin{enumerate}[label=(S\arabic*)]
\item $ki\geq(k-1)j>0$, \medskip\label{eq:s1}
\item $i>N-j$, \medskip \label{eq:s2}
\item $j>k(n-k-1)\text{ if }(k,n)\neq (2,4)$, \medskip \label{eq:s3}
\item $j\leq t\text{ and }i>0$, \medskip \label{eq:s4}
\item $(2k-1)i\geq (k-1)N$. \medskip \label{eq:s5}
\end{enumerate}

We now state improved versions of these bounds. These hold under the assumption that $t>0$. Note that the inequalities are reversed, since these are necessary conditions for the existence of Snow partitions.
\begin{enumerate}[label=(P\arabic*)]
\item $i\leq\dfrac{k-1}{k+1}j$, \medskip\label{eq:p1}
\item $i+j\leq N$, \medskip \label{eq:p2}
\item $j\leq\dfrac{1}{2}k(n-k+t-1)$, \medskip \label{eq:p3}
\item $j-i\geq t$ if $i>0$, \medskip \label{eq:p4}
\item $i\leq\dfrac{k-1}{2k}N$. \medskip \label{eq:p5}
\end{enumerate}

\begin{example}The table below provides examples of parameters $(k,n,j,t,i)$ for which no Snow partitions exist. Each of these parameter sets is ruled out by our constraints \ref{eq:p1}--\ref{eq:p5}, but not by Snow's \ref{eq:s1}--\ref{eq:s5}.

\begin{center}
\begin{tabular}{|c c c c c| c c c c c|}
    \hline
    $k$ & $n$ & $j$ & $t$ & $i$ & \ref{eq:p1} & \ref{eq:p2} & \ref{eq:p3} & \ref{eq:p4} & \ref{eq:p5}\\
    \hline
    2&7&5&2&2 &False&True&True&True&True\\
    2&4&3&2&1 & True & True & False & True & True\\
    3&6&4&3&2 & True & True & True & False & True\\
    2&7&7&2&3 & False & True & False & True & False\\
    \hline
\end{tabular}
\end{center}
\end{example}

\begin{proposition}All Snow partitions with $i,t>0$ satisfy \emph{\ref{eq:p4}}.\end{proposition}
\begin{proof}
Choose a box $(a,b)\in\iota_\lambda(t)$ whose south and east neighbors are not in the $t$-interior. Then all boxes in the hook with vertex $(a,b)$ except for the vertex itself lie outside the $t$-interior, so
\[j-i\geq h_\lambda(a,b)-1\geq t+1-1\geq t.\qedhere\]
\end{proof}

We prove the remaining inequalities on the level of plane partitions by the same approach as \Cref{lem:nakano}. These inequalities then hold for all Snow partitions by the same reasoning (also see \Cref{rmk:tightly-bounded}, which applies to these inequalities in the same way).

\begin{proposition}
Every $(t-1,t-1)$-bounded plane partition $P$ satisfies \emph{\ref{eq:p1}}, \emph{\ref{eq:p2}}, \emph{\ref{eq:p3}} and \emph{\ref{eq:p5}}, with $k,n,j,i$ as in \Cref{lem:nakano}. Moreover, we have
\[i\leq\frac{\sqrt{N}(\sqrt{N}-1)}{2}.\]
\end{proposition}
\begin{proof}
\Cref{lem:nakano} gives $i+j\leq N$, which is \ref{eq:p2}.

We have the inequality $j-i\leq k(t-1)$ by replacing each column of $P$ by the first column. Combining this with $i+j\leq N$ gives \ref{eq:p3}.

We have $j-i\geq\max(k,n-k)$ (the total sum of entries compared to the first row/column) so with $i+j\leq N$ we get $i+j\leq k(j-i)$, which after rearranging gives \ref{eq:p1}. Applying $i+j\leq N$ to replace $j$ with $N-i$ in \ref{eq:p1} and rearranging gives \ref{eq:p5}. 

Replacing both $k$ and $n-k$ with $j-i$ in $i+j\leq N$ gives $i+j\leq (j-i)^2$. By finding the intersection of this region with $i+j\leq N$ (see the diagram below) we obtain $i\leq\sqrt{N}(\sqrt{N}-1)/2$.
\[\begin{tikzpicture}[scale = 0.4]
\fill [black!10!white, samples=300, domain=1:3.618] (1,0) --
    plot (\x,{0.5*(1+2*\x-sqrt(1+8*\x))}) -- (5,0) -- cycle;

\draw [->] (-0.5,0) -- (6,0) node [below right] {$j$};
\draw [->] (0,-0.5) -- (0,6) node [above left] {$i$};

\draw [thick] (-0.2,5.2) -- (5.2,-0.2);
\draw [thick, ->, samples=300, domain=1:4.9] plot (\x,{0.5*(1+2*\x-sqrt(1+8*\x))});

\draw (-0.2,5) -- (0.2,5) node [left, xshift = -2pt] {\small $N$};
\draw (5,-0.2) -- (5,0.2) node [below, yshift = -3pt] {\small $N$};
\end{tikzpicture}\qedhere\]
\end{proof}

All bounds in the above proposition are sharp for at least the staircase partitions $(k,\dots,2,1)$ where $n=2k$ and $t=2$.

\section{Hook Lengths and Euler Characteristic}\label{sec:euler}
\subsection{Algebraic proof}
We give an algebraic proof that the sum of $t$-hook ratios over all bounded partitions is independent of $t$. Recall $f_\lambda(t)$ from \Cref{def:t-hook-ratio}.

\begin{theorem}
\label{thm: euler characteristic hook length}
For every $t\in\mathbb{Z}$,
\[\sum_\lambda f_\lambda(t)=\binom{n}{k}\]
with the sum over all $(k,n-k)$-bounded partitions.
\end{theorem}
\begin{proof}
We define the twisted holomorphic Euler characteristic (see \cite[\S2.10]{Hir78}) of $\text{Gr}(k,n)$:
\[\chi(\text{Gr}(k,n),\Omega^j(t)) := \sum_{i\geq 0}(-1)^ih^i(\text{Gr}(k,n),\Omega^j(t)).\]
When $t=0$, the holomorphic Euler characteristic is related to the standard (topological) Euler characteristic $\chi(\text{Gr}(k,n))$ via 
\[\chi(\text{Gr}(k,n)) = \sum_{j\geq 0}(-1)^j \chi(\text{Gr}(k,n),\Omega^j).\]
One readily shows that $\chi(\text{Gr}(k,n)) = \binom{n}{k}$.
Now, let $X=\text{Gr}(k,n)$ and consider a generic section $\xi\in H^0(T_{X})$. This vanishes at $\binom{n}{k}$ points, corresponding to the zero locus $D$ (see e.g. \cite[\S14.1]{Ful84}). We consider the Koszul resolution (see e.g. \cite[\S17.2]{Eis95}) of the sheaf $\mathcal O_D$:
\[0\to\Omega_X^{k(n-k)} \xrightarrow{\iprod \xi}\cdots \xrightarrow{\iprod \xi} \Omega_X^2\xrightarrow{\iprod \xi}\Omega_X^1 \xrightarrow{\iprod \xi}\mathcal O_X \xrightarrow{} \mathcal O_{D}\to 0\]
which is exact because $\xi$ is a regular section. By the additivity of the Euler characteristic,
\[\sum(-1)^{j}\chi(\Omega_X^j) = \chi (\mathcal O_{D}) = \binom{n}{k}.\]
We note that $\chi (\mathcal O_{D}) = \chi(\mathcal O_X(t)\otimes \mathcal O_D)$ since $\mathcal O_D$ is a sum of skyscraper sheaves and $\mathcal O_X(t)$ is a line bundle, hence tensoring the Koszul resolution by $\mathcal O_X(t)$ we obtain
\[\sum_{j\geq 0}(-1)^{j}\chi(\Omega_X^j(t))=\binom{n}{k}.\]
By \Cref{thm:dimension-formula},
\[\sum_{|\lambda|=j} f_\lambda(t)=(-1)^j\chi(\Omega^j(t))\]
with the sum over all bounded partitions of size $j$, and hence
\[\sum_\lambda f_\lambda(t)=\sum_{j\geq 0}(-1)^j\chi(\Omega^j(t))=\binom{n}{k}\]
with the first sum now over all bounded partitions.
\end{proof}

\begin{example}[$t=0$]
    All partitions are $0$-core, and $f_\lambda(0)=1$ for every $\lambda$. The sum $\sum_\lambda f_\lambda(0)$ is the number of bounded partitions $\binom n k$, and the holomorphic Euler characteristic is $\sum (-1)^j\chi(\Omega^j(0))=\chi(\mathrm{Gr}(k,n))$.
\end{example}

\begin{example}[$t=1$]
    The only $1$-core partition is the empty partition $(0^k)$, and $f_{(0^k)}(1)=\binom{n}{k}$ by \Cref{ex: Macmahon}. Algebraically this follows from
\[h^0(\text{Gr}(k,n),\mathcal O(1))=\binom{n}{k}.\]
\end{example}

\subsection{Combinatorial interpretation}

Because the $t$-hook ratio $f_\lambda(t)$ is an integer-valued polynomial in $t$ (\Cref{rmk:integer-valued}), so is $\sum_\lambda f_\lambda(t)$ with the sum over all bounded partitions $\lambda$. By \Cref{thm: euler characteristic hook length}, this polynomial is constant.

Suppose $(a_1,b_1),\dots,(a_N,b_N)$ is an arbitrary linear ordering on the boxes of the $k\times(n-k)$ bounding rectangle. By expanding $f_\lambda(t)$, we see that the coefficient of $t^d$ is given by
\[[t^d]f_\lambda(t)=\sum_{1\leq\ell_1<\cdots<\ell_d\leq N}\frac{(-1)^d}{h_\lambda(a_{\ell_1},b_{\ell_1})\cdots h_\lambda(a_{\ell_d},b_{\ell_d})},\]
where $h_\lambda(a,b)=\lambda_a+\lambda_b^\T-a-b+1$ allows for both positive and negative hook lengths as in \Cref{sec:partitions}. The following statement (plus the calculation for $t=0$) is equivalent to \Cref{thm: euler characteristic hook length}.

\begin{corollary}
For every $d\geq 1$,
\[\sum_\lambda\sum_{1\leq\ell_1<\cdots<\ell_d\leq N}\frac{1}{h_\lambda(a_{\ell_1},b_{\ell_1})\cdots h_\lambda(a_{\ell_d},b_{\ell_d})}=0,\]
where the first sum is over all $(k,n-k)$-bounded partitions.
\end{corollary}

The authors are unaware of a direct combinatorial proof of this fact. The case where $d$ is odd follows from the duality $\lambda\mapsto\lambda^\C$, since any $d$-fold product of hook lengths in $\lambda$ can be exchanged for their negatives in $\lambda^\C$, which introduces a negative overall.

\begin{example}
For $k=2$ and $n=4$ we have the partitions:

\begin{center}
\begin{tabular}{r | c c c c c c }
\raisebox{1.5em}{$\lambda$} & $\begin{tikzpicture}[scale = 0.6]
\draw [black!50!white] (0,0) grid (2,2);
\draw [thick] (0,0) grid (2,2);
\node at (0.5,1.5) {$3$};
\node at (1.5,1.5) {$2$};
\node at (0.5,0.5) {$2$};
\node at (1.5,0.5) {$1$};
\end{tikzpicture}$ & $\begin{tikzpicture}[scale = 0.6]
\draw [black!50!white] (0,0) grid (2,2);
\draw [thick] (0,0) grid (1,1);
\draw [thick] (0,1) grid (2,2);
\node at (0.5,1.5) {$3$};
\node at (1.5,1.5) {$1$};
\node at (0.5,0.5) {$1$};
\node at (1.5,0.5) {$-1$};
\end{tikzpicture}$ & $\begin{tikzpicture}[scale = 0.6]
\draw [black!50!white] (0,0) grid (2,2);
\draw [thick] (0,0) grid (1,2);
\node at (0.5,1.5) {$2$};
\node at (1.5,1.5) {$-1$};
\node at (0.5,0.5) {$1$};
\node at (1.5,0.5) {$-2$};
\end{tikzpicture}$ & $\begin{tikzpicture}[scale = 0.6]
\draw [black!50!white] (0,0) grid (2,2);
\draw [thick] (0,1) grid (2,2);
\node at (0.5,1.5) {$2$};
\node at (1.5,1.5) {$1$};
\node at (0.5,0.5) {$-1$};
\node at (1.5,0.5) {$-2$};
\end{tikzpicture}$ & $\begin{tikzpicture}[scale = 0.6]
\draw [black!50!white] (0,0) grid (2,2);
\draw [thick] (0,1) grid (1,2);
\node at (0.5,1.5) {$1$};
\node at (1.5,1.5) {$-1$};
\node at (0.5,0.5) {$-1$};
\node at (1.5,0.5) {$-3$};
\end{tikzpicture}$ & $\begin{tikzpicture}[scale = 0.6]
\draw [black!50!white] (0,0) grid (2,2);
\node at (0.5,1.5) {$-1$};
\node at (1.5,1.5) {$-2$};
\node at (0.5,0.5) {$-2$};
\node at (1.5,0.5) {$-3$};
\end{tikzpicture}$ \\

$[t^2]f_\lambda(t)$ & $\dfrac{23}{12}$ & $-\dfrac{2}{3}$ & $-\dfrac{5}{4}$ & $-\dfrac{5}{4}$ & $-\dfrac{2}{3}$ & $\dfrac{23}{12}$
\end{tabular}
\end{center}
and the sum of the values $[t^2]f_\lambda(t)$ is zero.
\end{example}

\section{A \texorpdfstring{$q$}{q}-analogue}
\label{sec:q}
In this section, we compute the $q$-dimension (as a $\mathrm{GL}_n$ representation) of the cohomology $H^i(\Omega_X^j(t))$, discuss some examples, and conjecture a $q$-analogue of \Cref{thm: euler characteristic hook length}. We recall the notation and definitions from \Cref{sec:rep-theory}.

\subsection{Setup}

Following \cite{Stem94}, the $q$-dimension of a (finite dimensional) rational representation $V$ of $\mathrm{GL}_n$ is defined as the Laurent polynomial
\[\dim_q(V):=\sum_{\mu\in\Z^n}\dim(V[\mu])q^{\langle\varrho,\mu\rangle}\in\Z[q^{\pm 1/2}].\]
We make special note of the use of $\varrho$ as the Weyl vector here, rather than the shifted version $\varrho'$ that has been employed so far. The benefit of this choice is the identity
\[\dim_q(V)=\dim_{q^{-1}}(V)=\dim_q(V^\vee).\]
This holds by recalling that $V[\mu]$, $V[\mathrm{rev}(\mu)]$ and $V^\vee[-\mu]$ have the same dimension for every $\mu\in\Z^n$, and computing that $\langle \mathrm{rev}(\mu),\varrho\rangle=-\langle\mu,\varrho\rangle=\langle-\mu,\varrho\rangle$.

\begin{definition}
For a bounded $t$-core partition $\lambda$, set $[d_\lambda(t)]_q:=\dim_q L_{\beta_\lambda(t)}^\vee$.
\end{definition}

The following is immediate by taking the $q$-dimension of both sides of the equality in \Cref{thm:bwb}.

\begin{corollary}
For integers $(k,n,j,t,i)$ with $1\leq k< n$ and $j,i\geq 0$:
\[[h^{j,i}(t)]_q:=\dim_q(H^i(\mathrm{Gr}(k,n),\Omega^j(t)))=\sum_\lambda [d_\lambda(t)]_q\]
with the sum over all Snow partitions $\lambda$ with parameters $(k,n,j,t,i)$.
\end{corollary}

We observe that $[d_\lambda(t)]_q$ and $[h^{j,i}(t)]_q$ are always polynomials in $\Z[q^{\pm 1/2}]$ which are invariant under $q\mapsto q^{-1}$.

\subsection{Calculation of the \texorpdfstring{$q$}{q}-dimension}

We first calculate $[d_\lambda(t)]_q$ in terms of the BWB weight $\beta_\lambda(t)\in\Lambda_n$ (see \Cref{def:beta}) using the $q$-analogue of the Weyl dimension formula. For every integer $m$, define the (symmetrized) $q$-analogue
\[[m]_q:=\frac{q^{m/2}-q^{-m/2}}{q^{1/2}-q^{-1/2}}=q^{-(m-1)/2}\frac{1-q^m}{1-q}\in\Z[q^{\pm 1/2}]\]
which satisfies $[m]_q=[m]_{q^{-1}}$ and $[-m]_q=-[m]_q$.

\begin{theorem}\label{lem:q-weyl}
For a Snow partition $\lambda$,
\[[d_\lambda(t)]_q=\prod_{1\leq r<s\leq n}\frac{[\beta_\lambda(t)_r-\beta_\lambda(t)_s+s-r]_q}{[s-r]_q}.\]
\end{theorem}
\begin{proof}
We have $[d_\lambda(t)]_q=\dim_q L_{\beta_\lambda(t)}^\vee=\dim_q L_{\beta_\lambda(t)}$. By \cite[2.7]{Stem94}, for any $\beta\in\Lambda_n$ we have
\[\dim_q L_\beta=q^{-\langle \beta,\varrho\rangle}\prod_{\phi\in\Phi^+}\frac{1-q^{\langle \beta+\varrho,\phi\rangle}}{1-q^{\langle\varrho,\phi\rangle}}=q^{-\langle \beta,\varrho\rangle}\prod_{1\leq r<s\leq n}\frac{1-q^{\beta_r-\beta_s+s-r}}{1-q^{s-r}}.\]
Converting each term in the product to the symmetrized $q$-analogue gives
\[\dim_q L_\beta=q^z\prod_{1\leq r<s\leq n}\frac{[\beta_r-\beta_s+s-r]_q}{[s-r]_q}\]
for some shift $z$. Because the left-hand side is invariant under $q\mapsto q^{-1}$, as is every term of the form $[m]_q$, we must have $z=0$.
\end{proof}
\begin{remark}
In the remaining proofs in this section, we will repeat this trick of denoting by $q^z$ some undetermined $q$-shift, which we reason to be $q^0$ by showing that $[d_\lambda(t)]_q$ is equal to the product of $q^z$ and terms of the form $[m]_q$.
\end{remark}

In the case of a polynomial representation $L_\gamma$ for $\gamma\in\Lambda_n^+$, we can count tableaux of shape $\gamma$ by their size. Recall that $L_{\beta_\lambda(t)}^\vee$ was either isomorphic or dual to $L_{\gamma_\lambda(t)}$ depending on the sign of $t$ (see \Cref{def:gamma} and its preceding discussion).

\begin{theorem}\label{thm:q-dimension-gamma}
For a Snow partition $\lambda$, set $\gamma:=\gamma_\lambda(t)$. Then
\[[d_\lambda(t)]_q=q^{-(n+1)(n-k)|t|/2}\sum_{T\in\SSYT(\gamma;n)}q^{|T|}=\prod_{(a,b)\in\gamma}\frac{[n+b-a]_q}{[h_\gamma(a,b)]_q}.\]
\end{theorem}
\begin{proof}
If $t\geq 0$, then $L_{\beta_\lambda(t)}^\vee\cong L_\gamma$, so $[d_\lambda(t)]_q=\dim_q L_\gamma$. If $t< 0$, then $L_{\beta_\lambda(t)}^\vee$ is dual to $L_\gamma$ so $[d_\lambda(t)]_q=\dim_{q^{-1}}L_\gamma=\dim_q L_\gamma$. In either case it remains to calculate the $q$-dimension of $L_\gamma$.

For any $\gamma\in\Lambda_n^+$, we have $\dim L_\gamma[\mu]=\#\SSYT(\gamma;\mu)$ for every $\mu\in\Z^n$ and so
\begin{align*}\dim_q(L_\gamma)&=\sum_{\mu\in\Z^n}\#\SSYT(\gamma;\mu)q^{\langle \mu,\varrho\rangle}\\
&=s_\gamma(q^{(n-1)/2},q^{(n-3)/2},\dots,q^{-(n-1)/2})\\
&=q^{-(n-1)|\gamma|/2}s_\gamma(1,q,\dots,q^{n-1}),\end{align*}
where we use the fact that $s_\gamma(x_1,\dots,x_n)$ is a symmetric polynomial of homogeneous degree $|\gamma|$. Reintroducing a factor of $q$ in each variable gives
\[q^{-(n+1)|\gamma|/2}s_\gamma(q,q^2,\dots,q^n)=q^{-(n+1)|\gamma|/2}\sum_{T\in\SSYT(\gamma;n)}q^{|T|}.\]
Recall that $|\gamma_\lambda(t)|=(n-k)|t|$ by \Cref{thm:gamma}, which gives the first equality.

For the second, \cite[\S3 Example 1]{Mac86} gives
\[s_\gamma(1,q,\dots,q^{n-1})=q^{z_1}\prod_{(a,b)\in\gamma}\frac{1-q^{n+b-a}}{1-q^{h_\gamma(a,b)}}\]
for some $z_1$, and so after applying the relevant $q$-shift we get
\[[d_\lambda(t)]_q=q^{z_2}\prod_{(a,b)\in\gamma}\frac{[n+b-a]_q}{[h_\gamma(a,b)]_q}\]
for some $z_2$. But by $q\mapsto q^{-1}$ symmetry, we must have $z_2=0$.
\end{proof}

Finally, we prove $[d_\lambda(t)]_q$ can also be calculated by the expected $q$-analogue of the $t$-hook ratio. 

\begin{definition}
For a bounded partition $\lambda$ and an integer $t$, set
\[[f_\lambda(t)]_q=\prod_{a=1}^k\prod_{b=1}^{n-k}\frac{[h_\lambda(a,b)-t]_q}{[h_\lambda(a,b)]_q}.\]
\end{definition}
\begin{remark}
Using $[m]_q=q^{-(m-1)/2}(1-q^m)/(1-q)$ we can rewrite the $q$-analogue of the $t$-hook ratio as
\[[f_\lambda(t)]_q=q^{-Nt/2}\prod_{a=1}^k\prod_{b=1}^{n-k}\frac{1-q^{h_\lambda(a,b)-t}}{1-q^{h_\lambda(a,b)}}.\]
\end{remark}

We start with a $q$-analogue of \Cref{lem:skew-howe-dimension}.

\begin{lemma}
For a bounded partition $\lambda$:
\[\dim_q L_\lambda\cdot \dim_q L_{\lambda^\T}=(-1)^{j-N}\prod_{a=1}^k\prod_{b=1}^{n-k}\frac{[a+b-1]_q}{[h_\lambda(a,b)]_q}\]
\end{lemma}
\begin{proof}
We follow the proof of \cite[Theorem 1.1]{Gre18}, but apply the $q$-analogue. There is a bijection $c:\mathrm{SSYT}(\lambda;k)\to\mathrm{SSYT}(\lambda^\C;k)$ since every column must contain every entry between $1$ and $k$. Moreover, $|c(T)|=(n-k)(1+\cdots+k)-|T|$. In particular, we obtain that
\begin{align*}
\dim_q L_\lambda&=q^{z_1}s_\lambda(1,q,\dots,q^{k-1})\\
&=q^{z_2}s_{\lambda^\C}(1,q^{-1},\dots,q^{-(k-1)})\\
&=q^{z_3}s_{\lambda^\C}(1,q,\dots,q^{k-1})\\
&=q^{z_4}\dim_q L_{\lambda^\C}
\end{align*}
and by $q\mapsto q^{-1}$ symmetry we have $z_4=0$. By applying the calculation in the proof of \Cref{thm:q-dimension-gamma} to $\lambda^{\T\C}$ we have
\[\dim_q L_{\lambda^\T}=\dim_q L_{\lambda^{\T\C}}=\prod_{(a,b)\in\lambda^{\T\C}}\frac{[n-k+b-a]_q}{[h_{\lambda^{\T\C}}(a,b)]_q}.\]
Reindexing the numerator by associating the boxes of $\lambda^{\T\C}$ with the boxes of the skew partition $(n-k)^k/\lambda$ under the map $(a,b)\mapsto (k-b+1,n-k-a+1)$ turns $n-k+b-a$ into $k+b-a$ and gives
\[\dim_q L_{\lambda^\T}=\frac{\prod_{(a,b)\in (n-k)^k/\lambda}[k+b-a]_q}{\prod_{(a,b)\in\lambda^\C}[h_{\lambda^\C}(a,b)]_q}.\]
Hence,
\[\dim_q L_\lambda\cdot \dim_q L_{\lambda^\T}=\frac{\prod_{a=1}^k\prod_{b=1}^{n-k}[k+b-a]_q}{\prod_{(a,b)\in\lambda}[h_\lambda(a,b)]_q\prod_{(a,b)\in\lambda^\C}[h_{\lambda^\C}(a,b)]_q}.\]
The terms in the numerator become $[a+b-1]_q$ after reindexing $a$ for $k-a+1$. In the denominator, the hook lengths in $\lambda^\C$ correspond to the negative hook lengths of $\lambda$, so the denominator is $(-1)^{N-j}\prod_{a=1}^k\prod_{b=1}^{n-k}[h_\lambda(a,b)]_q$ as required.
\end{proof}

\begin{theorem}\label{thm:q-dimension-hook}
For a Snow partition $\lambda$,
\[[d_\lambda(t)]_q=(-1)^{i+j}[f_\lambda(t)]_q.\]
\end{theorem}
\begin{proof}
We follow the proof of \Cref{thm:dimension-formula}. Since $\beta_\lambda(t)=\sigma(\alpha_\lambda(t)+\varrho)-\varrho$ where $\sigma$ is a permutation of length $i$, we have
\[[d_\lambda(t)]_q=(-1)^i\prod_{1\leq r<s\leq n}\frac{[\alpha_\lambda(t)_r-\alpha_\lambda(t)_s+s-r]_q}{[s-r]_q}.\]
We split this product into three terms. When $r,s\leq k$ we get $\dim_q L_{\lambda}$ and when $r,s>k$ we get $\dim_q L_{\lambda^\T}$. The final term is
\[(-1)^N\prod_{a=1}^k\prod_{b=1}^{n-k}\frac{[h_\lambda(a,b)-t]_q}{[a+b-1]_q}\]
so multiplying these terms together and applying the above lemma gives the result.
\end{proof}

\subsection{Examples}

We begin with a generic example to showcase various methods for calculating $[d_\lambda(t)]_q$, before taking the $q$-analogue of some examples in \Cref{sec:examples}.
\begin{example}
Let $\lambda=(2,1,1)$, which is the only Snow partition with parameters $k=3$, $n=5$, $j=4$, $t=3$ and $i=1$.
\[\begin{tikzpicture}[scale = 0.6]
\draw [black!50!white] (0,0) grid (2,3);
\draw [fill = intrColor] (0,2) -- (1,2) -- (1,3) -- (0,3) -- cycle;
\draw [thick] (0,0) grid (1,2);
\draw [thick] (0,2) grid (2,3);
\end{tikzpicture}\]
Then $\dim_q H^1(\mathrm{Gr}(3,5),\Omega^4(3))=[d_\lambda(3)]_q$. We first calculate via $\gamma_\lambda(t)$. Because $\partial_\lambda(3)=(1,1,1)$ and $3-\partial_{\lambda^\T}(3)=(1,2)$ we have $\gamma_\lambda(t)=(2,1^4)$. Hence, we want the weights of all tableaux in $\SSYT((2,1^4);5)$.
\[\begin{tikzpicture}[scale = 0.4]
\foreach \i in {0,1,2,3,4} {
\draw (3*\i,0) grid (3*\i+2,-1);
\draw (3*\i,-1) grid (3*\i+1,-5);
	\foreach \j in {1,2,3,4,5} {
	\node at (3*\i+0.5,0.5-\j) {$\j$};}
}
\node at (-2,-2.5) {$T$};
\node at (-2,-6) {$|T|$};

\node at (1.5,-0.5) {$1$};
\node at (1,-6) {$16$};

\node at (4.5,-0.5) {$2$};
\node at (4,-6) {$17$};

\node at (7.5,-0.5) {$3$};
\node at (7,-6) {$18$};

\node at (10.5,-0.5) {$4$};
\node at (10,-6) {$19$};

\node at (13.5,-0.5) {$5$};
\node at (13,-6) {$20$};

\draw (-1,0) -- (-1,-6.5);
\end{tikzpicture}\]
Since $-(n+1)(n-k)|t|/2=-18$ we have \[[d_\lambda(t)]_q=q^{-18}(q^{16}+q^{17}+q^{18}+q^{19}+q^{20})=q^{-2}+q^{-1}+1+q+q^2.\]

Notice that the shift symmetrizes $\sum_T q^{|T|}$ about $q\mapsto q^{-1}$. Alternatively, we could have calculated $[d_\lambda(t)]_q$ using the second formula in \Cref{thm:q-dimension-gamma}. Instead, we use the $t$-hook ratio for $\lambda$ and \Cref{thm:q-dimension-hook} to get
\[[d_\lambda(t)]_q=(-1)^{1+4}[f_\lambda(3)]_q=-\frac{[1]_q}{[4]_q}\cdot\frac{[-1]_q}{[2]_q}\cdot\frac{[-2]_q}{[1]_q}\cdot\frac{[-2]_q}{[1]_q}\cdot\frac{[-4]_q}{[-1]_q}\cdot\frac{[-5]_q}{[-2]_q}\]
which after cancelling terms is $-[-5]_q=[5]_q=q^{-2}+q^{-1}+1+q+q^2$.
\end{example}

\begin{example}[$j=0$]
Following \Cref{ex: Macmahon}, the only Snow partition with $j=0$ is $\lambda=(0^k)$, and for every $t\geq 0$ we obtain
\begin{align*}[d_\lambda(t)]_q&=\prod_{a=1}^k\prod_{b=1}^{n-k}\frac{[a+b+t-1]_q}{[a+b-1]_q}\\
&=q^{-Nt/2}\prod_{a=1}^k\prod_{b=1}^{n-k}\frac{1-q^{a+b+t-1}}{1-q^{a+b-1}}\\
&=q^{-Nt/2}\sum_P q^{|P|}
\end{align*}
where the final sum is over all $(k,n-k,t)$-bounded plane partitions (see \cite{Mac86}).
\end{example}

\begin{definition}
For integers $1\leq k< n$, define the (symmetrized) $q$-analogue of the binomial coefficient
\[\begin{bmatrix}n\\k\end{bmatrix}_q=\frac{[n]_q\cdots [1]_q}{[k]_q\cdots [1]_q[n-k]_q\cdots [1]_q}\]
\end{definition}

\begin{example}[$t=2$]
    Following \Cref{ex:t=2}, assume that $k\leq n/2$, and set $\Delta_m:=(m,\dots,2,1)$ for $0\leq m\leq k$. We have
\[\gamma_{\Delta_m}(2)=(2^{n-k-m},1^{2m},0^{k-m}).\]
Applying \Cref{thm:q-dimension-gamma} to this shape we get
\[[h^{m(m+1)/2,m(m-1)/2}(2)]_q=\frac{[2m+1]_q}{[n+1]_q}\begin{bmatrix}n+1\\k+m+1\end{bmatrix}_q\begin{bmatrix}n+1\\n-k+m+1\end{bmatrix}_q\]
for every $0\leq m\leq \min(k,n-k)$, and $[h^{j,i}(2)]_q=0$ otherwise.
\end{example}

Finally, we conjecture a $q$-analogue for \Cref{thm: euler characteristic hook length}, which we hope can lead to a combinatorial proof of that result.

\begin{conjecture}
    \label{con:q-euler-characteristic}
Fix $1\leq k< n$. Then for every integer $t$,
\[\sum_{\lambda}[f_{\lambda}(t)]_q=\begin{bmatrix}n\\k\end{bmatrix}_{q^t}\]
where the sum is over all $(k,n-k)$-bounded partitions $\lambda$.
\end{conjecture}

\printbibliography

\end{document}